\newcommand{\R}{\mathbb{R}}
\renewcommand{\Re}{\mathbb{R}}
\newcommand{\gf}{\nabla f}
\newcommand{\<}{\left\langle}
\renewcommand{\>}{\right\rangle}
\newcommand{\comment}[1]{} % for invisible comments
\DeclareMathOperator{\tr}{tr}
\DeclareMathOperator{\sgn}{sgn}
\renewcommand{\dfrac}{\displaystyle \frac} % for better display of fractions
\begin{document}

\title{The Augmented Lagrange Multiplier Method for\\
Exact Recovery of Corrupted Low-Rank Matrices\thanks{This is only
a technical report. {\bf Please cite ``Zhouchen Lin, Risheng Liu,
and Zhixun Su, Linearized Alternating Direction Method with
Adaptive Penalty for Low Rank Representation, NIPS 2011."}
(available at http://arxiv.org/abs/1109.0367) instead for a more
general method called Linearized Alternating Direction Method.
Thanks!
%Grants or other notes
%about the article that should go on the front page should be
%placed here. General acknowledgments should be placed at the end of the article.
}
}
%\subtitle{Do you have a subtitle?\\ If so, write it here}

%\titlerunning{Short form of title}        % if too long for running head

\author{Zhouchen Lin \and
        Minming Chen \and
        Yi Ma
}

\authorrunning{Z. Lin et al.} % if too long for running head

\institute{Zhouchen Lin \at
%              Visual Computing Group, Microsoft Research Asia, Beijing 100190, China. \\
%              Tel.: +86-10-58963143\\
%              Fax: +86-10-88099511\\
%              \email{zhoulin@microsoft.com}           %  \\
%             \emph{Present address:} of F. Author  %  if needed
                Key Laboratory of Machine Perception, School of
                EECS, Peking University.
              \email{zlin@pku.edu.cn}           %  \\
           \and
           Minming Chen \at
              Institute of Computing Technology, Chinese Academy of Sciences,
              China.
           \and
           Yi Ma \at
              Visual Computing Group, Microsoft Research Asia, Beijing 100190, China, and Electrical \& Computer Engineering Department, University of Illinois at Urbana-Champaign, USA.
}

\date{Received: date / Accepted: date}
% The correct dates will be entered by the editor

\maketitle

\begin{abstract}
This paper proposes scalable and fast algorithms for solving the Robust PCA problem, namely recovering a low-rank matrix with an unknown fraction of
its entries being arbitrarily corrupted. This problem arises in many
applications, such as image processing, web data ranking, and
bioinformatic data analysis. It was recently shown that under
surprisingly broad conditions, the Robust PCA problem can be
exactly solved via convex optimization that minimizes a
combination of the nuclear norm and the $\ell^1$-norm . In this
paper, we apply the method of augmented Lagrange multipliers (ALM)
to solve this convex program. As the objective function is non-smooth,
we show how to extend the classical analysis of ALM to such new objective
functions and prove the optimality of the proposed algorithms and characterize
their convergence rate. Empirically, the proposed new algorithms can be more
than \emph{five times faster} than the previous state-of-the-art
algorithms for Robust PCA, such as the accelerated proximal
gradient (APG) algorithm. Moreover, the new algorithms achieve
higher precision, yet being less storage/memory demanding. We also show
that the ALM technique can be used to solve the (related but
somewhat simpler) matrix completion problem and obtain rather
promising results too. Matlab code of all algorithms discussed are
available at \url{http://perception.csl.illinois.edu/matrix-rank/home.html}
\keywords{Low-rank matrix recovery or
completion \and Robust principal component analysis \and Nuclear
norm minimization \and $\ell^1$-norm minimization \and Proximal
gradient algorithms \and Augmented Lagrange multipliers}
% \PACS{PACS code1 \and PACS code2 \and more}
% \subclass{MSC code1 \and MSC code2 \and more}
\end{abstract}

\section{Introduction}
Principal Component Analysis (PCA), as a popular tool for
high-dimensional data processing, analysis, compression, and
visualization, has wide applications in scientific and engineering
fields \cite{Jolliffe1986}. It assumes that the given
high-dimensional data lie near a much lower-dimensional linear
subspace. To large extent, the goal of PCA is to efficiently and
accurately estimate this low-dimensional subspace.

Suppose that the given data are arranged as the columns of a large
matrix $D \in \Re^{m \times n}$. The mathematical model for
estimating the low-dimensional subspace is to find a low rank
matrix $A$, such that the discrepancy between $A$ and $D$ is
minimized, leading to the following constrained optimization:
\begin{eqnarray}
\label{eqn:pca} \min_{A,E} & \|E\|_F, \quad \mbox{subject to} &
 \quad \mathrm{rank}(A) \leq r, \quad D = A + E,
\end{eqnarray}
where $r\ll \min(m,n)$ is the target dimension of the subspace and
$\|\cdot\|_F$ is the Frobenius norm, which corresponds to assuming
that the data are corrupted by i.i.d. Gaussian noise. This problem
can be conveniently solved by first computing the Singular Value
Decomposition (SVD) of $D$ and then projecting the columns of $D$
onto the subspace spanned by the $r$ principal left singular
vectors of $D$ \cite{Jolliffe1986}.

As PCA gives the optimal estimate when the corruption is caused by
additive i.i.d. Gaussian noise, it works well in practice as long
as the magnitude of noise is small. However, it breaks down under
large corruption, even if that corruption affects only very few of
the observations. In fact, even if only one entry of $A$ is
arbitrarily corrupted, the estimated $\hat A$ obtained by
classical PCA can be arbitrarily far from the true $A$. Therefore,
it is necessary to investigate whether a low-rank matrix $A$ can
still be efficiently and accurately recovered from a corrupted
data matrix $D = A + E$, where some entries of the additive errors
$E$ may be {\em arbitrarily large}.

Recently, Wright et al. \cite{Wright2009} have shown that under
rather broad conditions the answer is affirmative: as long as
the error matrix $E$ is sufficiently sparse (relative to the rank
of $A$), one can exactly recover the low-rank matrix $A$ from $D =
A + E$ by solving the following convex optimization problem:
\begin{eqnarray}
\label{eqn:rpca} \min_{A,E} & \|A\|_* + \lambda\,\|E\|_1, \quad
\mbox{subject to} &  \quad D = A + E,
\end{eqnarray}
where $\|\cdot \|_*$ denotes the nuclear norm of a matrix (i.e.,
the sum of its singular values), $\|\cdot \|_1$ denotes the sum of
the absolute values of matrix entries, and $\lambda$ is a positive
weighting parameter. Due to the ability to exactly recover
underlying low-rank structure in the data, even in the presence of
large errors or outliers, this optimization is referred to as {\em
Robust PCA} (RPCA) in \cite{Wright2009} (a popular term that has
been used by a long line of work that aim to render PCA robust to
outliers and gross corruption). Several applications of RPCA, e.g.
background modeling and removing shadows and specularities from
face images, have been demonstrated in \cite{Wright-NIPS2009} to
show the advantage of RPCA.

The optimization (\ref{eqn:rpca}) can be treated as a general
convex optimization problem and solved by any off-the-shelf
interior point solver (e.g., CVX \cite{Boyd-cvx}), after being
reformulated as a semidefinite program \cite{Chandrasekharan2009}.
However, although interior point methods normally take very few
iterations to converge, they have difficulty in handling large
matrices because the complexity of computing the step direction is
$O(m^6)$, where $m$ is the dimension of the matrix. As a result,
on a typical personal computer (PC) generic interior point solvers
cannot handle matrices with dimensions larger than $m = 10^2$. In
contrast, applications in image and video processing often involve
matrices of dimension $m = 10^4$ to $10^5$; and applications in
web search and bioinformatics can easily involve matrices of
dimension $m = 10^6$ and beyond. So the generic interior point
solvers are too limited for Robust PCA to be practical for many
real applications.

That the interior point solvers do not scale well for large
matrices is because they rely on second-order information of the
objective function. To overcome the scalability issue, we should
use the first-order information only and fully harness the special
properties of this class of convex optimization problems. For
example, it has been recently shown that the (first-order)
iterative thresholding (IT) algorithms can be very efficient for
$\ell^1$-norm minimization problems arising in compressed sensing
\cite{Yin2008,Beck2009,Yin2008-SJIS,Cai2009-MC}.  It has also been
shown in \cite{Cai2008} that the same techniques can be used to
minimize the nuclear norm for the matrix completion (MC) problem,
namely recovering a low-rank matrix from an incomplete but clean
subset of its entries \cite{Recht2008-SR,Candes2008}.

As the matrix recovery (Robust PCA) problem \eqref{eqn:rpca}
involves minimizing a combination of both the $\ell^1$-norm and
the nuclear norm, in the original paper \cite{Wright2009},  the
authors have also adopted the iterative thresholding technique to
solve \eqref{eqn:rpca} and obtained similar convergence and
scalability properties. However, the iterative thresholding scheme
proposed in \cite{Wright2009} converges extremely slowly.
Typically, it requires about $10^4$ iterations to converge, with
each iteration having the same cost as one SVD. As a result, even
for matrices with dimensions as small as $m = 800$, the algorithm
has to run 8 hours on a typical PC. To alleviate the slow
convergence of the iterative thresholding method
\cite{Wright2009}, Lin et al. \cite{Lin2009} have proposed two new
algorithms for solving the problem \eqref{eqn:rpca}, which in some
sense complementary to each other: The first one is an accelerated
proximal gradient (APG) algorithm applied to the primal, which is
a direct application of the FISTA framework introduced by
\cite{Beck2009}, coupled with a fast continuation
technique\footnote{Similar techniques have been applied to the
matrix completion problem by \cite{Toh2009}.}; The second one is a
gradient-ascent algorithm applied to the dual of the problem
\eqref{eqn:rpca}. From simulations with matrices of dimension up
to $m = 1,000$, both methods are at least 50 times faster than the
iterative thresholding method (see \cite{Lin2009} for more
details).

In this paper, we present novel algorithms for matrix recovery
which utilize techniques  of augmented Lagrange multipliers (ALM).
The exact ALM (EALM) method to be proposed here is proven to have
a pleasing Q-linear convergence speed, while the APG is in theory
only sub-linear. A slight improvement over the exact ALM leads an
inexact ALM (IALM) method, which converges practically as fast as
the exact ALM, but the required number of partial SVDs is
significantly less. Experimental results show that IALM is at
least \emph{five times faster} than APG, and its precision is also
higher. In particular, the number of non-zeros in $E$ computed by
IALM is much more accurate (actually, often exact) than that by
APG, which often leave many small non-zero terms in $E$.

In the rest of the paper, for completeness, we will first sketch
the previous work in Section \ref{sec:Previous}. Then we present
our new ALM based algorithms and analyze their convergence
properties in Section \ref{sec:ALM} (while leaving all technical proofs
to Appendix A). We will also quickly
illustrate how the same ALM method can be easily adapted to solve
the (related but somewhat simpler) matrix completion (MC) problem.
We will then discuss some implementation details of our algorithms
in Section \ref{sec:Discuss}. Next in Section \ref{sec:Experiments},
we compare the new algorithms and other existing algorithms for both matrix recovery  and matrix completion, using extensive simulations on randomly generated matrices. Finally we give some concluding remarks in Section
\ref{sec:Conclusions}.

\section{Previous Algorithms for Matrix Recovery}
In this section, for completeness as well as purpose of
comparison, we briefly introduce and summarize other existing
algorithms for solving the matrix recovery problem
\eqref{eqn:rpca}.

\label{sec:Previous}
\subsection{The Iterative Thresholding Approach}
The IT approach proposed in \cite{Wright2009} solves a relaxed convex problem
of (\ref{eqn:rpca}):
\begin{equation}
\label{eqn:IT} \min_{A,E} \quad \|A\|_* + \lambda\,\|E\|_1 +
\frac{1}{2\tau}\,\|A\|_F^2 + \frac{1}{2\tau}\,\|E\|_F^2, \quad
\mbox{subject to}\quad A+E=D,
\end{equation}
where $\tau$ is a large positive scalar so that the objective
function is only perturbed slightly. By introducing a Lagrange
multiplier $Y$ to remove the equality constraint, one has the
Lagrangian function of (\ref{eqn:IT}):
\begin{equation}
\label{eqn:IT_Lagrange} L(A,E,Y) = \|A\|_* + \lambda\,\|E\|_1 +
\frac{1}{2\tau}\,\|A\|_F^2 + \frac{1}{2\tau}\,\|E\|_F^2+
\frac{1}{\tau}\langle Y, D-A-E\rangle.
\end{equation}
Then the IT approach updates $A$, $E$ and $Y$ iteratively. It
updates $A$ and $E$ by minimizing $L(A,E,Y)$ with respect to $A$
and $E$, with $Y$ fixed. Then the amount of violation of the
constraint $A+E=D$ is used to update $Y$.

For convenience, we introduce the following soft-thresholding
(shrinkage) operator:
\begin{equation}
\mathcal{S}_\varepsilon[x] \doteq \left\{ \begin{array}{ll} x -
\varepsilon, & \mbox{if }x > \varepsilon, \\ x + \varepsilon, &
\mbox{if }x < - \varepsilon,
\\ 0, & \text{otherwise},
\end{array} \right.
\end{equation}
where $x \in \Re$ and $\varepsilon
> 0$. This operator can be extended to vectors and matrices by applying it
element-wise. Then the IT approach works as described in Algorithm
\ref{alg:IT}, where the thresholdings directly follow from the
well-known analysis \cite{Cai2008,Yin2008}:
\begin{equation}
U\mathcal{S}_\varepsilon[S]V^T = \arg\min\limits_X
\varepsilon\|X\|_*+\dfrac{1}{2}\|X-W\|_F^2,\quad
\mathcal{S}_\varepsilon[W] =\arg\min\limits_X \varepsilon
\|X\|_1+\dfrac{1}{2}\|X-W\|_F^2,
\end{equation}
where $USV^T$ is the SVD of $W$. Although being extremely
simple and provably correct, the IT algorithm
requires a very large number of iterations to converge and it is
difficult to choose the step size $\delta_k$ for speedup, hence
its applicability is limited.
\begin{algorithm}[h]
\caption{\bf (RPCA via Iterative Thresholding)}
\begin{algorithmic}[1]
\REQUIRE Observation matrix $D \in \R^{m \times n}$, weights
$\lambda$ and $\tau$. \WHILE{not converged} \STATE $(U,S,V) =
\mbox{svd}(Y_{k-1})$, \STATE $A_k = U \mathcal{S}_{\tau}[S] V^T$,
\STATE $E_k = \mathcal{S}_{\lambda \tau}[Y_{k-1}]$, \STATE $Y_k =
Y_{k-1}+\delta_k(D-A_k-E_k)$. \ENDWHILE \ENSURE $A \leftarrow
A_k$, $E \leftarrow E_k$.
\end{algorithmic}
\label{alg:IT}
\end{algorithm}

\subsection{The Accelerated Proximal Gradient Approach}
\label{sec:apg} A general theory of the accelerated proximal
gradient approach can be found in
\cite{Tseng2008-SJO,Beck2009,Nesterov1983-SMD}. To solve the
following unconstrained convex problem:
\begin{equation}
\label{eqn:PG} \min_{X\in\mathcal{H}} \quad F(X) \; \doteq \; g(X)
+ f(X),
\end{equation}
where $\mathcal{H}$ is a real Hilbert space endowed with an inner
product $\langle \cdot, \cdot \rangle$ and a corresponding norm
$\|\cdot\|$, both $g$ and $f$ are convex and $f$ is further
Lipschitz continuous: $\| \gf(X_1) - \gf(X_2) \| \le L_f \| X_1 -
X_2 \|$, one may approximate $f(X)$ locally as a quadratic
function and solve
\begin{equation}
\label{eqn:approx}
X_{k+1}=\arg\min\limits_{X\in\mathcal{H}}Q(X,Y_k) \doteq f(Y_k) +
\langle \gf(Y_k), X-Y_k \rangle + \frac{L_f}{2} \|X-Y_k\|^2 +
g(X),
\end{equation}
which is assumed to be easy, to update the solution $X$. The
convergence behavior of this iteration depends strongly on the
points $Y_k$ at which the approximations $Q(X,Y_k)$ are formed.
The natural choice $Y_k = X_k$ (proposed, e.g., by
\cite{Fukushima1981-IJSS}) can be interpreted as a gradient
algorithm, and results in a convergence rate no worse than
$O(k^{-1})$ \cite{Beck2009}. However, for smooth $g$ Nesterov
showed that instead setting $Y_k = X_k + \tfrac{t_{k-1} - 1}{t_k}
(X_k - X_{k-1})$ for a sequence $\{t_k\}$ satisfying
$t_{k+1}^2-t_{k+1} \leq t_k^2$ can improve the convergence rate to
$O(k^{-2})$ \cite{Nesterov1983-SMD}. Recently, Beck and Teboulle
extended this scheme to the nonsmooth $g$, again demonstrating a
convergence rate of $O(k^{-2})$, in a sense that $F(X_k) - F(X^*)
\leq Ck^{-2}$ \cite{Beck2009}.

The above accelerated proximal gradient approach can be directly
applied to a relaxed version of the RPCA problem, by identifying
$$X=(A,E),\quad f(X)=\frac{1}{\mu}\|D-A-E\|_F^2,\quad\mbox{and}\quad g(X)=\|A\|_*+\lambda \|E\|_1,$$
where $\mu$ is a small positive scalar. A continuation technique
\cite{Toh2009}, which varies $\mu$, starting from a large initial
value $\mu_0$ and decreasing it geometrically with each iteration
until it reaches the floor $\bar \mu$, can greatly speed up the
convergence. The APG approach for RPCA is described in Algorithm
\ref{alg:apg} (for details see \cite{Lin2009,Wright-NIPS2009}).

\begin{algorithm}[h]
\caption{\bf (RPCA via Accelerated Proximal Gradient)}
\begin{algorithmic}[1]
\REQUIRE Observation matrix $D \in \R^{m \times n}$, $\lambda$.
\STATE $A_0= A_{-1}= 0$; $E_0= E_{-1} = 0$; $t_0=t_{-1} = 1$;
$\bar{\mu} > 0$; $\eta < 1$. \WHILE{not converged} \STATE $Y_k^A =
A_k + \frac{t_{k-1}-1}{t_k}\,(A_k-A_{k-1})$, $Y_k^E = E_k +
\frac{t_{k-1}-1}{t_k}\,(E_k-E_{k-1})$. \STATE $G_k^A = Y_k^A -
\frac{1}{2}\,\left( Y_k^A + Y_k^E - D\right)$. \STATE $(U,S,V) =
\mathrm{svd}(G_k^A)$, $A_{k+1} = U
\mathcal{S}_{\frac{\mu_k}{2}}[S] V^T$. \STATE $G_k^E = Y_k^E -
\frac{1}{2}\,\left( Y_k^A + Y_k^E - D\right)$. \STATE $E_{k+1} =
\mathcal{S}_{\frac{\lambda \mu_k}{2}}[G_k^E]$. \STATE $t_{k+1} =
\frac{1+\sqrt{4t_k^2+1}}{2}$; $\mu_{k+1} =
\max(\eta\,\mu_k,\bar{\mu})$. \STATE $k\gets k+1$. \ENDWHILE
\ENSURE $A \gets A_k$, $E \gets E_k$.
\end{algorithmic}
\label{alg:apg}
\end{algorithm}

\subsection{The Dual Approach}
\label{sec:dual} The dual approach proposed in our earlier work \cite{Lin2009} tackles the problem (\ref{eqn:rpca}) via its dual. That is, one first solves the dual problem
\begin{equation}
\label{eqn:Dual}
\begin{array}{c}
\displaystyle\max_{Y} \;  \langle D, Y\rangle, \quad \mbox{subject
to} \quad J(Y) \leq 1,
\end{array}
\end{equation}
for the optimal Lagrange multiplier $Y$, where
\begin{equation}
 \langle A, B\rangle = \tr(A^TB),\quad J(Y) = \max \left( \|Y\|_2 ,\lambda^{-1} \|Y\|_\infty
 \right),
\end{equation}
and $\|\cdot\|_\infty$ is the maximum absolute value of the matrix
entries. A steepest ascend algorithm constrained on the surface
$\{Y|J(Y)=1\}$ can be adopted to solve (\ref{eqn:Dual}), where the
constrained steepest ascend direction is obtained by projecting
$D$ onto the tangent cone of the convex body $\{Y|J(Y)\leq 1\}$.
It turns out that the optimal solution to the primal problem
(\ref{eqn:rpca}) can be obtained during the process of finding the
constrained steepest ascend direction. For details of the final algorithm,
one may refer to \cite{Lin2009}.

A merit of the dual
approach is that only the principal singular space associated to
the largest singular value 1 is needed. In theory, computing this
special principal singular space should be easier than computing
the principal singular space associated to the \emph{unknown}
leading singular values. So the dual approach is promising if an
efficient method for computing the principal singular space
associated to the \emph{known largest} singular value can be
obtained.

\section{The Methods of Augmented Lagrange Multipliers}\label{sec:ALM}
In \cite{Bertsekas-LM}, the general method of augmented
Lagrange multipliers is introduced for solving constrained optimization
problems of the kind:
\begin{equation}
\min f(X),\quad\mbox{subject to}\quad h(X)=0,
\end{equation}
where $f:\mathbb{R}^n\rightarrow \mathbb{R}$ and $h:\mathbb{R}^n
\rightarrow \mathbb{R}^m$. One may define the augmented Lagrangian
function:
\begin{equation}
L(X,Y,\mu) = f(X) + \langle Y, h(X)\rangle +
\dfrac{\mu}{2}\|h(X)\|_F^2,
\end{equation}
where $\mu$ is a positive scalar, and then the optimization problem can
be solved via the method of augmented Lagrange multipliers, outlined as Algorithm
\ref{alg:general_ALM} (see \cite{Bertsekas-NP} for more details).

\begin{algorithm}[th]
\caption{\bf (General Method of Augmented Lagrange Multiplier)}
\begin{algorithmic}[1]
\STATE $\rho \geq 1$. \WHILE{not converged} \STATE Solve $X_{k+1}
= \arg\min\limits_{X} L(X,Y_k,\mu_k)$. \STATE $Y_{k+1} = Y_k +
\mu_k h(X_{k+1})$; \STATE Update $\mu_k$ to $\mu_{k+1}$. \ENDWHILE
\ENSURE $X_k$.
\end{algorithmic}
\label{alg:general_ALM}
\end{algorithm}

Under some rather general conditions, when $\{\mu_k\}$ is an
increasing sequence and both $f$ and $h$ are \emph{continuously
differentiable} functions, it has been proven in
\cite{Bertsekas-LM} that the Lagrange multipliers $Y_k$ produced
by Algorithm \ref{alg:general_ALM} converge Q-linearly to the
optimal solution when $\{\mu_k\}$ is bounded and super-Q-linearly
when $\{\mu_k\}$ is unbounded. This superior convergence property
of ALM makes it very attractive. Another merit of ALM is that the
optimal step size to update $Y_k$ is proven to be the chosen
penalty parameter $\mu_k$, making the parameter tuning much easier
than the iterative thresholding algorithm. A third merit of ALM is
that  the algorithm converges to the exact optimal solution, even
without requiring $\mu_k$ to approach infinity
\cite{Bertsekas-LM}. In contrast, strictly speaking both the
iterative thresholding and APG approaches mentioned earlier only
find approximate solutions for the problem. Finally, the analysis
(of convergence) and the implementation of the ALM algorithms are
relatively simple, as we will demonstrate on both the matrix
recovery and matrix completion problems.

\subsection{Two ALM Algorithms for Robust PCA (Matrix Recovery)}
For the RPCA problem \eqref{eqn:rpca}, we may apply the augmented
Lagrange multiplier method by identifying:
$$X=(A,E),\quad f(X)=\|A\|_*+\lambda \|E\|_1,\quad\mbox{and}\quad h(X)=D-A-E.$$ Then the Lagrangian function is:
\begin{equation}
L(A,E,Y,\mu) \doteq \|A\|_* + \lambda \|E\|_1 + \langle Y, D-A-E
\rangle + \dfrac{\mu}{2}\|D-A-E\|_F^2,
\end{equation}
and the ALM method for solving the RPCA problem can be described
in Algorithm \ref{alg:exact_ALM}, which we will refer to as the
exact ALM (EALM) method, for reasons that will soon become clear.

The initialization $Y_0^*=\sgn(D)/J(\sgn(D))$ in the algorithm is
inspired by the dual problem (\ref{eqn:Dual}) as it is likely to
make the objective function value $\langle D, Y_0^*\rangle$
reasonably large.

\begin{algorithm}[th]
\caption{\bf (RPCA via the Exact ALM Method)}
\begin{algorithmic}[1]
\REQUIRE Observation matrix $D \in \R^{m \times n}$, $\lambda$.
\STATE $Y_0^*=\sgn(D)/J(\sgn(D))$; $\mu_0 > 0$; $\rho > 1$; $k =
0$. \WHILE{not converged} \STATE \COMMENT {Lines 4-12 solve
$(A_{k+1}^*, E_{k+1}^*) = \arg\min\limits_{A,E}
L(A,E,Y_k^*,\mu_k)$.} \STATE $A_{k+1}^{0} = A_{k}^{*}$,
$E_{k+1}^{0} = E_{k}^{*}$, $j = 0$; \WHILE{not converged} \STATE
\COMMENT {Lines 7-8 solve $A_{k+1}^{j+1} = \arg\min\limits_{A}
L(A,E_{k+1}^j,Y^*_k,\mu_k)$.} \STATE $(U,S,V) =
\mbox{svd}(D-E_{k+1}^{j}+\mu_k^{-1} Y_k^*)$; \STATE $A_{k+1}^{j+1}
= U\mathcal{S}_{\mu_k^{-1}}[S]V^T$; \STATE \COMMENT {Line 10
solves $E_{k+1}^{j+1} = \arg\min\limits_{E}
L(A_{k+1}^{j+1},E,Y^*_k,\mu_k)$.} \STATE $E_{k+1}^{j+1} =
\mathcal{S}_{\lambda\mu_k^{-1}}[D-A_{k+1}^{j+1}+\mu_k^{-1}
Y_k^*]$; \STATE $j\gets j+1$. \ENDWHILE \STATE $Y_{k+1}^* = Y_k^*
+ \mu_k (D - A_{k+1}^* - E_{k+1}^*)$. \STATE Update $\mu_k$ to
$\mu_{k+1}$. \STATE $k \gets k+1$. \ENDWHILE \ENSURE
$(A_k^*,E_k^*)$.
\end{algorithmic}
\label{alg:exact_ALM}
\end{algorithm}

Although the objective function of the RPCA problem
\eqref{eqn:rpca} is non-smooth and hence the results in
\cite{Bertsekas-LM} do not directly apply here, we can still prove
that Algorithm \ref{alg:exact_ALM} has the same excellent
convergence property. More precisely, we have established
the following statement.
\begin{theorem}\label{thm:exact_ALM}
For Algorithm \ref{alg:exact_ALM}, any accumulation point
$(A^*,E^*)$ of $(A_k^*,E_k^*)$ is an optimal solution to the RPCA
problem and the convergence rate is at least $O(\mu_k^{-1})$ in
the sense that $$\left|\|A_k^*\|_* + \lambda \|E_k^*\|_1 -f^*
\right|=O(\mu_{k-1}^{-1}),$$ where $f^*$ is the optimal value of
the RPCA problem.
\end{theorem}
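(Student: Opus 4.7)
The plan is to adapt the classical ALM convergence theory of \cite{Bertsekas-LM} to the present nonsmooth setting, using subgradient optimality at each inner minimization together with the fact that the subdifferentials of the nuclear norm and the $\ell^1$-norm are globally bounded in their respective dual norms. First I would write out the first-order optimality conditions for the inner minimizer $(A_{k+1}^*, E_{k+1}^*) = \arg\min L(\cdot,\cdot,Y_k^*,\mu_k)$ in $A$ and in $E$ separately, and substitute the dual update $Y_{k+1}^* = Y_k^* + \mu_k(D - A_{k+1}^* - E_{k+1}^*)$. This yields the two inclusions
\[
Y_{k+1}^* \in \partial\|A_{k+1}^*\|_*, \qquad Y_{k+1}^* \in \lambda\,\partial\|E_{k+1}^*\|_1,
\]
and since every element of $\partial\|\cdot\|_*$ has spectral norm at most $1$ and every element of $\lambda\,\partial\|\cdot\|_1$ has entrywise sup-norm at most $\lambda$, one obtains $J(Y_{k+1}^*) \le 1$ for all $k$, with $J$ the dual norm from Section \ref{sec:dual}. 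Hence $\{Y_k^*\}$ is uniformly bounded in Frobenius norm, and the primal residual $D - A_{k+1}^* - E_{k+1}^* = \mu_k^{-1}(Y_{k+1}^* - Y_k^*)$ is of order $O(\mu_k^{-1})$ and vanishes in the limit since $\mu_k \to \infty$.

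Next I would sandwich the objective gap by applying the subgradient inequality at both endpoints. The inclusions above let one use $Y_{k+1}^*$ as a subgradient of $f(A,E) = \|A\|_* + \lambda\|E\|_1$ at $(A_{k+1}^*, E_{k+1}^*)$; evaluating at a feasible primal optimum $(A_{\mathrm{opt}}, E_{\mathrm{opt}})$ and using $A_{\mathrm{opt}} + E_{\mathrm{opt}} = D$ gives
\[
f(A_{k+1}^*, E_{k+1}^*) - f^* \;\le\; -\mu_k^{-1}\langle Y_{k+1}^*,\, Y_{k+1}^* - Y_k^*\rangle.
\]
Symmetrically, a KKT-dual optimum $Y_{\mathrm{opt}} \in \partial\|A_{\mathrm{opt}}\|_* \cap \lambda\,\partial\|E_{\mathrm{opt}}\|_1$ serves as a subgradient at the optimum, and evaluating it at $(A_{k+1}^*, E_{k+1}^*)$ yields the matching lower bound
\[
f(A_{k+1}^*, E_{k+1}^*) - f^* \;\ge\; -\mu_k^{-1}\langle Y_{\mathrm{opt}},\, Y_{k+1}^* - Y_k^*\rangle.
\]
Combined with the uniform bound on the dual iterates, this produces $|f(A_{k+1}^*, E_{k+1}^*) - f^*| = O(\mu_k^{-1})$, which after re-indexing is exactly the claimed $O(\mu_{k-1}^{-1})$ rate. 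Feasibility of any accumulation point $(A^*,E^*)$ then follows from the vanishing primal residual, and $f(A^*,E^*) = f^*$ follows from continuity of $f$ combined with the objective-gap bound, so $(A^*,E^*)$ is optimal.

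The main obstacle is the boundedness step: the smoothness-based argument in \cite{Bertsekas-LM} is not available here, so one must instead exploit the norm-specific fact that subgradients of a positively homogeneous convex function lie in a bounded set in the dual norm. A secondary point, which I would handle separately, is the justification that the alternating inner loop in Lines 5--12 of Algorithm \ref{alg:exact_ALM} actually produces the exact minimizer of $L(\cdot,\cdot,Y_k^*,\mu_k)$; this is a standard convergence result for block coordinate descent on a convex function whose per-block subproblems have unique, closed-form (shrinkage-based) solutions.
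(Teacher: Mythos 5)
Your proof is correct, and it shares the paper's key lemma: the dual update forces $Y_{k+1}^* \in \partial\|A_{k+1}^*\|_* \cap \partial\bigl(\lambda\|E_{k+1}^*\|_1\bigr)$, hence $J(Y_{k+1}^*)\le 1$ and $\{Y_k^*\}$ is bounded (this is exactly Lemma \ref{lemma:bound_Y} via Theorem \ref{thm:dual_norm}). After that, though, you take a genuinely different route on both halves of the sandwich. For the upper bound the paper compares Lagrangian \emph{values} rather than first-order conditions: $L(A_{k+1}^*,E_{k+1}^*,Y_k^*,\mu_k)\le \min_{A+E=D}L = f^*$, then completes the square to write the multiplier-plus-penalty terms as $\tfrac{1}{2\mu_k}(\|Y_{k+1}^*\|_F^2-\|Y_k^*\|_F^2)$, whereas you apply the subgradient inequality at the iterate with subgradient $Y_{k+1}^*$; both yield $f^*+O(\mu_k^{-1})$ from boundedness of $\{Y_k^*\}$. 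The more substantive divergence is the lower bound: the paper uses the triangle inequality $\|A_{k+1}^*\|_*\ge\|D-E_{k+1}^*\|_*-\|D-A_{k+1}^*-E_{k+1}^*\|_*$ together with feasibility of $(D-E_{k+1}^*,E_{k+1}^*)$, which requires no dual object at all, while you invoke a KKT multiplier $Y_{\mathrm{opt}}$ common to both subdifferentials at the optimum. That certificate does exist (affine constraint, everywhere-finite convex objective, so a saddle point exists --- the paper itself assumes one in Lemma \ref{lemma:finite_series_sum}), but it is an extra existence claim you should state and justify; in exchange, your argument is symmetric and transfers verbatim to the general problem \eqref{eqn:general_problem}, whereas the paper's trick exploits that substituting $A=D-E$ produces a feasible point. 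Your closing caveat is also well taken: both proofs rely on the inner loop of Algorithm \ref{alg:exact_ALM} returning the exact joint minimizer of $L(\cdot,\cdot,Y_k^*,\mu_k)$, and the block-coordinate-descent justification you flag is needed but is not supplied in the paper's appendix either.
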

\begin{proof}
See Appendix \ref{sec:exact_ALM}. \qquad \end{proof}
From Theorem
\ref{thm:exact_ALM}, we see that if $\mu_k$ grows geometrically,
the EALM method will converge Q-linearly; and if $\mu_k$ grows
faster, the EALM method will also converge faster. However,
numerical tests show that for larger $\mu_k$, the iterative
thresholding approach to solve the sub-problem $(A_{k+1}^*,
E_{k+1}^*) = \arg\min\limits_{A,E} L(A,E,Y_k^*,\mu_k)$ will
converge slower. As the SVD accounts for the majority of the
computational load, the choice of $\{\mu_k\}$ should be judicious
so that the total number of SVDs is minimal.

Fortunately, as it turns out, we do not have to solve the
sub-problem $$(A_{k+1}^*, E_{k+1}^*) = \arg\min\limits_{A,E}
L(A,E,Y_k^*,\mu_k)$$ exactly. Rather, updating $A_{k}$ and $E_{k}$
once when solving this sub-problem is sufficient for $A_{k}$ and
$E_{k}$ to converge to the optimal solution of the RPCA problem.
This leads to an inexact ALM (IALM) method, described in
Algorithm \ref{alg:inexact_ALM}.
\begin{algorithm}[th]
\caption{\bf (RPCA via the Inexact ALM Method)}
\begin{algorithmic}[1]
\REQUIRE Observation matrix $D \in \R^{m \times n}$, $\lambda$.
\STATE $Y_0=D/J(D)$; $E_0=0$; $\mu_0 > 0$; $\rho > 1$; $k = 0$.
\WHILE{not converged} \STATE \COMMENT {Lines 4-5 solve $A_{k+1} =
\arg\min\limits_{A} L(A,E_{k},Y_k,\mu_k)$.} \STATE $(U,S,V) =
\mbox{svd}(D-E_{k}+\mu_k^{-1} Y_k)$; \STATE $A_{k+1} =
U\mathcal{S}_{\mu_k^{-1}}[S]V^T$. \STATE \COMMENT {Line 7 solves
$E_{k+1} = \arg\min\limits_{E} L(A_{k+1},E,Y_k,\mu_k)$.} \STATE
$E_{k+1} = \mathcal{S}_{\lambda\mu_k^{-1}}[D-A_{k+1}+\mu_k^{-1}
Y_k]$. \STATE $Y_{k+1} = Y_k + \mu_k (D - A_{k+1} - E_{k+1})$.
\STATE Update $\mu_k$ to $\mu_{k+1}$. \STATE $k \gets k+1$.
\ENDWHILE \ENSURE $(A_k,E_k)$.
\end{algorithmic}
\label{alg:inexact_ALM}
\end{algorithm}

The validity and optimality of Algorithm \ref{alg:inexact_ALM} is guaranteed by
the following theorem.
\begin{theorem}\label{thm:inexact_ALM}
For Algorithm \ref{alg:inexact_ALM}, if $\{\mu_k\}$ is
nondecreasing and $\sum\limits_{k=1}^{+\infty}\mu_k^{-1}=+\infty$
then $(A_k,E_k)$ converges to an optimal solution $(A^*,E^*)$ to
the RPCA problem.
\end{theorem}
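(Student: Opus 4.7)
The plan is to combine boundedness of the dual iterates with a Lyapunov-type monotonicity argument, adapting the classical ADMM convergence analysis to the present non-smooth objective and varying penalty $\mu_k$. I would first set up the key identities. Define the auxiliary dual variable
\begin{equation*}
\hat Y_{k+1} := Y_k + \mu_k(D - A_{k+1} - E_k),
\end{equation*}
arising from the optimality of the singular-value shrinkage in Line 5, and the residual $r_k := D - A_{k+1} - E_{k+1}$, so that $Y_{k+1} = Y_k + \mu_k r_k$ and $\hat Y_{k+1} - Y_{k+1} = \mu_k(E_{k+1} - E_k)$. The optimality conditions of Lines 5 and 7 give the subgradient inclusions $\hat Y_{k+1}\in\partial\|A_{k+1}\|_*$ and $Y_{k+1}\in\lambda\partial\|E_{k+1}\|_1$. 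Since $\partial\|\cdot\|_*$ is contained in the spectral-norm unit ball and $\partial\|\cdot\|_1$ in the entrywise $\ell^\infty$ unit ball, both $\{Y_k\}$ and $\{\hat Y_k\}$ are uniformly bounded.

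Next I would fix any KKT triple $(A^*,E^*,Y^*)$ for (\ref{eqn:rpca}), which exists by convexity and strong duality, so that $A^*+E^*=D$ and $Y^*\in\partial\|A^*\|_*\cap\lambda\partial\|E^*\|_1$. Monotonicity of the subdifferentials yields $\langle \hat Y_{k+1}-Y^*,\,A_{k+1}-A^*\rangle\geq 0$ and $\langle Y_{k+1}-Y^*,\,E_{k+1}-E^*\rangle\geq 0$. Summing these, using the feasibility identity $(A_{k+1}-A^*)+(E_{k+1}-E^*)=-r_k$, substituting $\hat Y_{k+1}=Y_{k+1}+\mu_k(E_{k+1}-E_k)$, and expanding $\|Y_{k+1}-Y^*\|_F^2$, I obtain a Lyapunov recursion of the schematic form
\begin{equation*}
\Phi_{k+1}\leq \Phi_k - c\,\mu_k\|r_k\|_F^2 - c\,\mu_k\|E_{k+1}-E_k\|_F^2 + \epsilon_k,
\end{equation*}
with $\Phi_k := \mu_{k-1}^{-1}\|Y_k-Y^*\|_F^2 + \mu_{k-1}\|E_k-E^*\|_F^2$, where the error $\epsilon_k$ accounts for the mismatch between $\mu_{k-1}$ and $\mu_k$ and is controlled by the hypothesis $\sum \mu_k^{-1}=+\infty$, which caps how fast $\mu_k$ may grow.

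Summing telescopically then shows that $\{(A_k,E_k)\}$ is bounded and $\sum_k \mu_k\|r_k\|_F^2<\infty$, whence $r_k\to 0$. The identity $\mu_k(E_{k+1}-E_k)=\hat Y_{k+1}-Y_{k+1}$ combined with boundedness of the dual sequences gives $\|E_{k+1}-E_k\|_F\to 0$, so along a suitable subsequence $Y_k$ and $\hat Y_k$ share a common limit $Y_\infty$, while $(A_k,E_k)$ converges to some $(A_\infty,E_\infty)$ with $A_\infty+E_\infty=D$. Passing to the limit in the subgradient inclusions via the closed-graph property of $\partial\|\cdot\|_*$ and $\partial\|\cdot\|_1$ gives $Y_\infty\in\partial\|A_\infty\|_*\cap\lambda\partial\|E_\infty\|_1$, so $(A_\infty,E_\infty,Y_\infty)$ is a KKT point and $(A_\infty,E_\infty)$ is optimal. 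Full-sequence convergence follows by instantiating the Lyapunov with $Y^*:=Y_\infty$.

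The main obstacle I anticipate is the derivation of this Lyapunov recursion. The classical ALM theory of \cite{Bertsekas-LM} assumes smooth $f$ and $h$ and does not apply here, so one must work directly with monotonicity of the subgradient maps of the nuclear and $\ell^1$ norms. The inexactness --- only a single alternating sweep per outer iteration --- produces cross-terms in $E_{k+1}-E_k$ that must be absorbed using the identity $\hat Y_{k+1}-Y_{k+1}=\mu_k(E_{k+1}-E_k)$, and the varying penalty produces $\mu_k$ weights that do not telescope cleanly; it is precisely the hypothesis $\sum\mu_k^{-1}=+\infty$ that keeps the resulting error terms summable and closes the descent.
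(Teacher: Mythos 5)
Your overall architecture (boundedness of $Y_k,\hat Y_k$ via Theorem \ref{thm:dual_norm}, monotonicity of the subdifferentials against a KKT triple, a Lyapunov recursion, a subsequential limit, then full convergence by re-instantiating the Lyapunov at that limit) matches the paper's, but two of your key steps have genuine gaps. First, the limit argument: you claim that since $\hat Y_{k+1}-Y_{k+1}=\mu_k(E_{k+1}-E_k)$ and the dual sequences are bounded, $\|E_{k+1}-E_k\|_F\to 0$ and hence $Y_k$ and $\hat Y_k$ ``share a common limit'' along a subsequence. Boundedness only gives $\|\hat Y_{k+1}-Y_{k+1}\|_F\le 2C$; it does not make this difference vanish, even subsequentially, so you cannot pass to a single multiplier $Y_\infty$ lying in both $\partial\|A_\infty\|_*$ and $\lambda\partial\|E_\infty\|_1$, and your KKT argument for optimality of the limit collapses. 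The paper never needs $\hat Y_k-Y_k\to 0$: it instead plugs the two subgradient inclusions into the convexity inequality \eqref{eqn:upper_bound2}, obtaining $\|A_k\|_*+\lambda\|E_k\|_1\le f^*+\langle Y^*-\hat Y_k,A^*-A_k\rangle+\langle Y^*-Y_k,E^*-E_k\rangle-\langle Y^*,D-A_k-E_k\rangle$, and then shows the two inner products vanish along a subsequence.

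Second, the role you assign to $\sum_k\mu_k^{-1}=+\infty$ is wrong. You use it to ``cap how fast $\mu_k$ may grow'' so that the mismatch terms $\epsilon_k$ in your recursion are summable; but $\mu_k=k$ satisfies the hypothesis while $\mu_k-\mu_{k-1}$ does not vanish, so terms like $(\mu_k-\mu_{k-1})\|E_{k+1}-E^*\|_F^2$ need not be summable. The paper's Lyapunov $V_k=\|E_k-E^*\|_F^2+\mu_{k}^{-2}\|Y_k-Y^*\|_F^2$ is exactly non-increasing with no error term, because $\mu_k$ nondecreasing makes $\mu_k^{-2}$ non-increasing; your weighting $\mu_{k-1}^{-1}\|Y_k-Y^*\|_F^2+\mu_{k-1}\|E_k-E^*\|_F^2$ puts the monotonicity of $\mu_k$ on the wrong side of the $E$-term. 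The actual job of the divergence hypothesis is the step you are missing: telescoping $V_k$ (Lemma \ref{lemma:finite_series_sum}) gives $\sum_k\mu_k^{-1}s_k<\infty$ with $s_k:=\langle A_{k+1}-A^*,\hat Y_{k+1}-Y^*\rangle+\langle E_{k+1}-E^*,Y_{k+1}-Y^*\rangle\ge 0$, and $\sum_k\mu_k^{-1}=+\infty$ then forces $\liminf_k s_k=0$; feeding that subsequence into \eqref{eqn:upper_bound2}, together with $D-A_k-E_k\to 0$, yields an optimal subsequential limit, after which the monotone Lyapunov upgrades this to convergence of the whole sequence.
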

\begin{proof}
See Appendix \ref{sec:inexact_ALM}. \qquad \end{proof} We can
further prove that the condition
$\sum\limits_{k=1}^{+\infty}\mu_k^{-1}=+\infty$ is also necessary
to ensure the convergence, as stated in the following theorem.
\begin{theorem}\label{thm:necessary_condition}
If $\sum\limits_{k=1}^{+\infty}\mu_k^{-1}<+\infty$ then the
sequence $\{(A_k,E_k)\}$ produced by Algorithm
\ref{alg:inexact_ALM} may not converge to the optimal solution of
the RPCA problem.
\end{theorem}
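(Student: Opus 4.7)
Since the theorem asserts only that convergence \emph{may} fail, my plan is to exhibit a concrete small RPCA instance together with a specific schedule $\{\mu_k\}$ satisfying $\sum_{k=1}^{+\infty}\mu_k^{-1}<+\infty$ for which the IALM iterates provably converge to a non-optimal point. The guiding intuition is that once $\mu_k$ is large, the shrinkage thresholds $\mu_k^{-1}$ and $\lambda\mu_k^{-1}$ in Lines~5 and~7 of Algorithm~\ref{alg:inexact_ALM} cap how far each primal update can move the iterates, so the cumulative displacement over the entire run is controlled by $\sum_k\mu_k^{-1}$. If this ``budget'' is finite and strictly smaller than the distance from the IALM starting point to the optimum, the iterates cannot possibly reach the optimum.

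Concretely I would take $D=e_1e_1^T\in\mathbb{R}^{2\times 2}$, the weight $\lambda=1/2$, and the schedule $\mu_k=2^{k+1}$, so that $\sum_{k\ge 0}\mu_k^{-1}=1<+\infty$. A direct check gives $\|D\|_2=\|D\|_\infty=1$, hence $J(D)=\max(1,\lambda^{-1})=2$ and the IALM initialization is $Y_0=D/2$, $E_0=0$. The unique optimum of \eqref{eqn:rpca} for this instance is $A^*=0$, $E^*=D$ with $f^*=1/2$; this follows from the bound $\|A\|_*\ge\|A\|_2\ge|A_{11}|$ combined with $\|E\|_1\ge|1-A_{11}|$ and a short case analysis on the sign of $A_{11}$ (any off-diagonal contribution to $A$ strictly increases the cost).

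The heart of the proof is an invariance argument: every iterate lies in the one-parameter family $\{\alpha D:\alpha\in\mathbb{R}\}$. Writing $A_k=a_k D$, $E_k=e_k D$ and $Y_k=y_k D$, the singular-value shrinkage applied to the rank-one matrix $\alpha D$ returns $\max(\alpha-\mu_k^{-1},0)\,D$, while the entry-wise shrinkage applied to $\alpha D$ affects only the $(1,1)$ entry and returns $\max(\alpha-\lambda\mu_k^{-1},0)\,D$. Provided the positive branches of both shrinkages are active, the updates collapse to the scalar recurrence
\begin{equation*}
a_{k+1}=1-e_k-\tfrac{1}{2}\mu_k^{-1},\qquad e_{k+1}=e_k+\tfrac{1}{2}\mu_k^{-1},\qquad y_{k+1}=y_k,
\end{equation*}
starting from $e_0=0$, $y_0=1/2$. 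The identity $y_{k+1}=y_k$ is automatic because $a_{k+1}+e_{k+1}=1$, i.e.\ the iterates are exactly feasible at every step. Summing yields $e_k=\tfrac{1}{2}\sum_{j=0}^{k-1}\mu_j^{-1}\le 1/2$ and $a_k=1-e_k\ge 1/2$; together with $\tfrac{1}{2}\mu_k^{-1}\le 1/4$ this verifies that the positive branches remain active throughout, justifying the recurrence by induction.

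Passing to the limit, $e_k\to 1/2$ and $a_k\to 1/2$, so $(A_k,E_k)\to(\tfrac{1}{2}D,\tfrac{1}{2}D)$, whose objective value is $\tfrac{1}{2}+\lambda\cdot\tfrac{1}{2}=3/4>1/2=f^*$. The IALM sequence therefore converges to a feasible but strictly suboptimal point, proving the theorem. The main obstacle in executing this plan is not conceptual but purely a bookkeeping check, namely that the piecewise-linear soft-thresholding operators never switch branches during the run; the rank-one plus one-entry-sparse structure of $D$ is what makes the whole algorithm reduce to a scalar recurrence and removes any nontrivial SVD computation.
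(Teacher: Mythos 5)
Your proof is correct, but it takes a genuinely different route from the paper's. The paper argues abstractly: by Lemma~\ref{lemma:bound_Y} both $\{Y_k\}$ and $\{\hat{Y}_k\}$ are bounded by some constant $C$, so $\|E_{k+1}-E_k\|_F=\mu_k^{-1}\|\hat{Y}_{k+1}-Y_{k+1}\|_F\leq 2C\mu_k^{-1}$; when $\sum_k\mu_k^{-1}<+\infty$ the total displacement of $\{E_k\}$ is finite, hence $E_k$ cannot reach $E^*$ whenever $\|E_0-E^*\|_F>2C\sum_k\mu_k^{-1}$. This is exactly the ``finite budget'' intuition you state in your opening paragraph, but the paper stops at the conditional statement (``if badly initialized''), leaving implicit that such an instance exists for the algorithm's prescribed initialization $E_0=0$ (one can always arrange it, e.g.\ by scaling $D$, since $C$ depends only on $\lambda$ and the matrix dimensions). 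You instead exhibit a fully explicit $2\times 2$ instance with the algorithm's own initialization, reduce the iteration to a scalar recurrence via the invariance of the family $\{\alpha D\}$ under both shrinkage operators, and compute the limit $(\tfrac12 D,\tfrac12 D)$ exactly, showing it is feasible but has objective value $3/4>f^*=1/2$. I checked your computations (the optimality of $(0,D)$ via the bound $\|A\|_*\geq|A_{11}|$ and $\|E\|_1\geq|1-A_{11}|$, the recurrence $a_{k+1}=1-e_k-\tfrac12\mu_k^{-1}$, $e_{k+1}=e_k+\tfrac12\mu_k^{-1}$, $Y_{k+1}=Y_k$, and the branch-activity conditions $1-e_k\geq 1/2>\tfrac12\mu_k^{-1}$) and they are all sound. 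What each approach buys: the paper's argument is shorter and generalizes immediately to problems of the form \eqref{eqn:general_problem}, directly exposing the mechanism of failure; yours is self-contained (it does not need the boundedness lemma), removes the ``bad initialization'' caveat entirely, and gives a concrete certificate that the failure actually occurs for the algorithm as written.
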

\begin{proof}
See Appendix \ref{sec:necessary_condition}. \qquad \end{proof}
Note that, unlike Theorem \ref{thm:exact_ALM} for the exact ALM
method, Theorem \ref{thm:inexact_ALM} only guarantees convergence
but does not specify the rate of convergence for the inexact ALM
method. The condition
$\sum\limits_{k=1}^{+\infty}\mu_k^{-1}=+\infty$ implies that
$\mu_k$ cannot grow too fast. The choice of $\mu_k$ will be
discussed in detail in Section \ref{sec:Discuss}.

\subsection{An ALM Algorithm for Matrix Completion} The
matrix completion (MC) problem can be viewed as a special case of
the matrix recovery problem, where one has to recover the missing
entries of a matrix, given limited number of known entries. Such a
problem is ubiquitous, e.g., in machine learning
\cite{Abernethy2006,Amit2007,Argyriou2007}, control
\cite{Mesbahi1997} and computer vision \cite{Tomasi1992}. In many
applications, it is reasonable to assume that the matrix to
recover is of low rank. In a recent paper \cite{Candes2008},
Cand{\`{e}s} and Recht proved that most matrices $A$ of rank $r$
can be perfectly recovered by solving the following optimization
problem:
\begin{eqnarray}
\label{eqn:MC} \min_{A} \|A\|_*, \quad \mbox{subject to}
 \quad A_{ij}=D_{ij},\quad \forall (i,j)\in\Omega,
\end{eqnarray}
provided that the number $p$ of samples obeys $p\geq C r
n^{6/5}\ln n$ for some positive constant $C$, where $\Omega$ is
the set of indices of samples. This bound has since been improved by
the work of several others. The state-of-the-art algorithms to
solve the MC problem (\ref{eqn:MC}) include the APG approach
\cite{Toh2009} and the singular value thresholding (SVT) approach
\cite{Cai2008}. As the RPCA problem is closely connected to the MC
problem, it is natural to believe that the ALM method can be similarly
effective on the MC problem.

We may formulate the MC problem as follows
\begin{eqnarray}
\label{eqn:MC_reformulate} \min_{A} \|A\|_*, \quad \mbox{subject
to}
 \quad A+E=D,\quad \pi_\Omega(E)=0,
\end{eqnarray}
where $\pi_\Omega:\mathbb{R}^{m\times n}\rightarrow
\mathbb{R}^{m\times n}$ is a linear operator that keeps the
entries in $\Omega$ unchanged and sets those outside $\Omega$
(i.e., in $\bar{\Omega}$) zeros. As $E$ will compensate for the
unknown entries of $D$, the unknown entries of $D$ are simply set
as zeros. Then the \emph{partial} augmented Lagrangian function
(Section 2.4 of \cite{Bertsekas-LM}) of (\ref{eqn:MC_reformulate})
is
\begin{equation}\label{eqn:MC_ALM}
L(A,E,Y,\mu)=\|A\|_* + \langle Y,D-A-E \rangle +
\frac{\mu}{2}\|D-A-E\|_F^2.
\end{equation}
Then similarly we can have the exact and inexact ALM approaches
for the MC problem, where for updating $E$ the constraint
$\pi_\Omega(E)=0$ should be enforced when minimizing
$L(A,E,Y,\mu)$. The inexact ALM approach is described in Algorithm
\ref{alg:MC-IALM}.

\begin{algorithm}[th]
\caption{\bf (Matrix Completion via the Inexact ALM Method)}
\begin{algorithmic}[1]
\REQUIRE Observation samples $D_{ij}$, $(i,j)\in\Omega$, of matrix
$D \in \R^{m \times n}$. \STATE $Y_0=0$; $E_0=0$; $\mu_0
> 0$; $\rho
> 1$; $k = 0$. \WHILE{not converged} \STATE \COMMENT {Lines 4-5 solve $A_{k+1} =
\arg\min\limits_{A} L(A,E_{k},Y_k,\mu_k)$.} \STATE $(U,S,V) =
\mbox{svd}(D-E_{k}+\mu_k^{-1} Y_k)$; \STATE $A_{k+1} =
U\mathcal{S}_{\mu_k^{-1}}[S]V^T$. \STATE \COMMENT {Line 7 solves
$E_{k+1} = \arg\min\limits_{\pi_\Omega(E)=0}
L(A_{k+1},E,Y_k,\mu_k)$.} \STATE $E_{k+1} =
\pi_{\bar{\Omega}}(D-A_{k+1}+\mu_k^{-1} Y_k)$. \STATE $Y_{k+1} =
Y_k + \mu_k (D - A_{k+1} - E_{k+1})$. \STATE Update $\mu_k$ to
$\mu_{k+1}$. \STATE $k \gets k+1$. \ENDWHILE \ENSURE $(A_k,E_k)$.
\end{algorithmic}
\label{alg:MC-IALM}
\end{algorithm}
Note that due to the choice of $E_k$, $\pi_{\bar{\Omega}}(Y_k) =0$
holds throughout the iteration, i.e., the values of $Y_k$ at
unknown entries are always zeros. Theorems \ref{thm:exact_ALM} and
\ref{thm:inexact_ALM} are also true for the matrix completion
problem. As the proofs are similar, we hence omit them here.
Theorem \ref{thm:necessary_condition} is also true for the matrix
completion problem, because it is easy to verify that
$Y_k=\pi_{\Omega}(\hat{Y}_k)$. As $\{\hat{Y}_k\}$ is bounded (cf.
Lemma \ref{lemma:bound_Y}), $\{Y_k\}$ is also bounded. So the
proof of Theorem \ref{thm:necessary_condition} is still valid for
the matrix completion problem.

\section{Implementation Details}\label{sec:Discuss}
\label{sec:implementation_issues}

\paragraph{Predicting the Dimension of Principal
Singular Space.} \label{sec:svd} It is apparent that computing the
full SVD for the RPCA and MC problems is unnecessary: we only need
those singular values that are larger than a particular threshold
and their corresponding singular vectors. So a software package,
PROPACK \cite{Propack}, has been widely recommended in the
community. To use PROPACK, one have to predict the dimension of
the principal singular space whose singular values are larger than
a given threshold. For Algorithm \ref{alg:inexact_ALM}, the
prediction is relatively easy as the rank of $A_k$ is observed to
be monotonically increasing and become stable at the true rank. So
the prediction rule is:
\begin{eqnarray}\label{eqn:predict}
\mbox{sv}_{k+1}=\left\{
\begin{array}{ll}
\mbox{svp}_{k}+1,&\mbox{if } \mbox{svp}_{k}< \mbox{sv}_{k},\\
\min(\mbox{svp}_{k}+\mbox{round}(0.05d),d),&\mbox{if } \mbox{svp}_{k} = \mbox{sv}_{k},\\
\end{array}
\right.
\end{eqnarray}
where $d=\min(m,n)$, $\mbox{sv}_{k}$ is the predicted dimension
and $\mbox{svp}_{k}$ is the number of singular values in the
$\mbox{sv}_{k}$ singular values that are larger than $\mu_k^{-1}$,
and $\mbox{sv}_{0}=10$. Algorithm \ref{alg:exact_ALM} also uses
the above prediction strategy for the inner loop that solves
$(A_{k+1}^*,E_{k+1}^*)$. For the outer loop, the prediction rule
is simply
$\mbox{sv}_{k+1}=\min(\mbox{svp}_{k}+\mbox{round}(0.1d),d)$. As
for Algorithm \ref{alg:MC-IALM}, the prediction is much more
difficult as the ranks of $A_k$ are often oscillating. It is also
often that for small $k$'s the ranks of $A_k$ are close to $d$ and
then gradually decrease to the true rank, making the partial SVD
inefficient\footnote{Numerical tests show that when we want to
compute more than $0.2d$ principal singular vectors/values, using
PROPACK is often slower than computing the full SVD.}. To remedy
this issue, we initialize both $Y$ and $A$ as zero matrices, and
adopt the following truncation strategy which is similar to that
in \cite{Toh2009}:
\begin{eqnarray}
\mbox{sv}_{k+1}=\left\{
\begin{array}{ll}
\mbox{svn}_{k}+1,&\mbox{if } \mbox{svn}_{k}< \mbox{sv}_{k},\\
\min(\mbox{svn}_{k}+10,d),&\mbox{if } \mbox{svn}_{k} = \mbox{sv}_{k},\\
\end{array}
\right.
\end{eqnarray}
where $\mbox{sv}_{0}=5$ and
\begin{eqnarray}
\mbox{svn}_{k}=\left\{
\begin{array}{ll}
\mbox{svp}_{k},&\mbox{if } \mbox{maxgap}_{k} \leq 2,\\
\min(\mbox{svp}_{k},\mbox{maxid}_{k}),&\mbox{if } \mbox{maxgap}_{k} >2,\\
\end{array}
\right.
\end{eqnarray}
in which $\mbox{maxgap}_{k}$ and $\mbox{maxid}_{k}$ are the
largest ratio between successive singular values (arranging the
computed $\mbox{sv}_k$ singular values in a descending order) and
the corresponding index, respectively. We utilize the gap
information because we have observed that the singular values are
separated into two groups quickly, with large gap between them,
making the rank revealing fast and reliable. With the above
prediction scheme, the rank of $A_k$ becomes monotonically
increasing and be stable at the true rank.

\paragraph{Order of Updating $A$ and $E$.}
Although in theory updating whichever of $A$ and $E$ first does
not affect the convergence rate, numerical tests show that this
does result in slightly different number of iterations to achieve
the same accuracy. Considering the huge complexity of SVD for
large dimensional matrices, such slight difference should also be
considered. Via extensive numerical tests, we suggest updating $E$
first in Algorithms \ref{alg:exact_ALM} and \ref{alg:inexact_ALM}.
What is equally important, updating $E$ first also makes the rank
of $A_k$ much more likely to be monotonically increasing, which is
critical for the partial SVD to be effective, as having been
elaborated in the previous paragraph.

\paragraph{Memory Saving for Algorithm \ref{alg:MC-IALM}.}
In the real implementation of Algorithm \ref{alg:MC-IALM}, sparse
matrices are used to store $D$ and $Y_k$, and as done in
\cite{Toh2009} $A$ is represented as $A=LR^T$, where both $L$ and
$R$ are matrices of size $m\times \mbox{svp}_k$. $E_k$ is not
explicitly stored by observing
\begin{equation}
E_{k+1} = \pi_{\bar{\Omega}}(D-A_{k+1}+\mu_k^{-1}
Y_k)=\pi_{\Omega}(A_{k+1}) - A_{k+1}.\label{eqn:E_k}
\end{equation}
In this way, only $\pi_\Omega(A_k)$ is required to compute $Y_k$
and $D-E_{k}+\mu_k^{-1}Y_k$. So much memory can be saved due to
the small percentage of samples.

\paragraph{Stopping Criteria.} For the RPCA problem, the KKT conditions are:
\begin{equation}
D-A^*-E^*=0, \quad Y^* \in \partial \|A^*\|_*,\quad Y^* \in
\partial( \|\lambda E^*\|_1).
\end{equation}
The last two conditions hold if and only if $\partial \|A^*\|_*
\cap \partial( \|\lambda E^*\|_1) \neq \emptyset$. So we may take
the following conditions as the stopping criteria for Algorithms
\ref{alg:exact_ALM} and \ref{alg:inexact_ALM}:
\begin{equation}
\|D-A_k-E_k\|_F/\|D\|_F < \varepsilon_1\quad\mbox{and}\quad
\mbox{dist}(\partial \|A_k\|_*,\partial( \|\lambda
E_k\|_1))/\|D\|_F < \varepsilon_2,
\end{equation}
where $\mbox{dist}(X,Y)=\min\{\|x-y\|_F|x\in X,y\in Y\}$. For
Algorithm \ref{alg:exact_ALM}, the second condition is always
guaranteed by the inner loop. So we only have to check the first
condition. For Algorithm \ref{alg:inexact_ALM}, unfortunately, it
is expensive to compute $\mbox{dist}(\partial \|A_k\|_*,\partial(
\|\lambda E_k\|_1))$ as the projection onto $\partial \|A_k\|_*$
is costly. So we may estimate $\mbox{dist}(\partial
\|A_k\|_*,\partial( \|\lambda E_k\|_1))$ by $\|\hat{Y}_k -
Y_k\|_F=\mu_{k-1}\|E_{k}-E_{k-1}\|_F$ since $\hat{Y}_k \in
\partial \|A_k\|_*$ and $Y_k\in \partial( \|\lambda E_k\|_1)$.

Similarly, for the MC problem we may take the following conditions
as the stopping criteria for Algorithm \ref{alg:MC-IALM}:
\begin{equation}
\|D-A_k-E_k\|_F/\|D\|_F < \varepsilon_1\quad\mbox{and}\quad
\mbox{dist}(\partial \|A_k\|_*,S)/\|D\|_F < \varepsilon_2,
\end{equation}
where $S=\{Y|Y_{ij}=0 \mbox{ if }(i,j)\notin \Omega\}$. Again, as
it is expensive to compute $\mbox{dist}(\partial \|A_k\|_*,S)$ we
may estimate $\mbox{dist}(\partial \|A_k\|_*,S)$ by $\|\hat{Y}_k -
Y_k\|_F=\mu_{k-1}\|E_{k}-E_{k-1}\|_F$. As
$\mu_{k-1}\|E_{k}-E_{k-1}\|_F$ actually significantly
overestimates $\mbox{dist}(\partial \|A_k\|_*,S)$, in real
computation we may use the following stopping criteria for
Algorithm \ref{alg:MC-IALM}:
\begin{equation}
\|D-A_k-E_k\|_F/\|D\|_F < \varepsilon_1\quad\mbox{and}\quad
\min(\mu_k,\sqrt{\mu_k})\|E_{k}-E_{k-1}\|_F/\|D\|_F <
\varepsilon_2.
\end{equation}
Note that by (\ref{eqn:E_k}) $\|E_{k}-E_{k-1}\|_F$ can be
conveniently computed as $$\sqrt{\|A_k - A_{k-1}\|_F^2 -
\|\pi_{\Omega}(A_k)-\pi_{\Omega}(A_{k-1})\|_F^2}.$$

\paragraph{Updating $\mu_k$.} For the RPCA problem, the updating
rule is:
\begin{eqnarray}
\mu_{k+1}=\left\{\begin{array}{ll} \rho\mu_k, &\mbox{if }
\mu_k\|E_{k+1}-E_{k}\|_F/\|D\|_F < \varepsilon_2,\\
\mu_k, &\mbox{otherwise,}
\end{array}\right.
\end{eqnarray}
where $\rho > 1$. One can easily see that this updating rule is
consistent with Theorem \ref{thm:inexact_ALM}. For the MC problem,
the updating rule is:
\begin{eqnarray}
\mu_{k+1}=\left\{\begin{array}{ll} \rho\mu_k, &\mbox{if }
\min(\mu_k,\sqrt{\mu_k})\|E_{k+1}-E_{k}\|_F/\|D\|_F < \varepsilon_2,\\
\mu_k, &\mbox{otherwise.}
\end{array}\right.
\end{eqnarray}

%\begin{eqnarray}
%\mu_{k+1}=\left\{\begin{array}{ll} \rho\mu_k, &\mbox{if
%}\|D-A_{k+1}-E_{k+1}\|_F/\|D\|_F \geq \varepsilon_1 \mbox{ or }
%\sqrt{\mu_k}\|E_{k+1}-E_{k}\|_F/\|D\|_F < \varepsilon_2,\\
%\mu_k, &\mbox{otherwise.}
%\end{array}\right.
%\end{eqnarray}

\paragraph{Choosing Parameters.}
For Algorithm \ref{alg:exact_ALM}, we set
$\mu_0=0.5/\|\sgn(D)\|_2$ and $\rho=6$. The stopping criterion for
the inner loop is $\|A_{k}^{j+1}-A_{k}^{j}\|_F/\|D\|_F < 10^{-6}$
and $\|E_{k}^{j+1}-E_{k}^{j}\|_F/\|D\|_F < 10^{-6}$. The stopping
criterion for the outer iteration is
$\|D-A_{k}^*-E_{k}^*\|_F/\|D\|_F < 10^{-7}$. For Algorithm
\ref{alg:inexact_ALM}, we set $\mu_0=1.25/\|D\|_2$ and $\rho=1.6$.
And the parameters in the stopping criteria are $\varepsilon_1 =
10^{-7}$ and $\varepsilon_2 = 10^{-5}$. For Algorithm
\ref{alg:MC-IALM}, we set $\mu_0=1/\|D\|_2$ and
$\rho=1.2172+1.8588\rho_s$, where $\rho_s=|\Omega|/(mn)$ is the
sampling density and the relation between $\rho$ and $\rho_s$ is
obtained by regression. And the parameters in the stopping
criteria are $\varepsilon_1 = 10^{-7}$ and $\varepsilon_2 =
10^{-6}$.

\section{Simulations}
\label{sec:Experiments} In this section, using numerical
simulations, for the RPCA problem we compare the proposed ALM
algorithms with the APG algorithm proposed in \cite{Lin2009}; for
the MC problem, we compare the inexact ALM algorithm with the SVT
algorithm \cite{Cai2008} and the APG algorithm \cite{Toh2009}. All
the simulations are conducted and timed on the same workstation
with an Intel Xeon E5540 2.53GHz CPU that has 4 cores and 24GB
memory\footnote{But on a Win32 system only 3GB can be used by each
thread.}, running Windows 7 and Matlab (version
7.7).\footnote{Matlab code for all the algorithms compared are
available at
\url{http://perception.csl.illinois.edu/matrix-rank/home.html}}

\paragraph{I. Comparison on the Robust PCA Problem.}
For the RPCA problem, we use randomly generated square matrices
for our simulations. We denote the true solution by the ordered
pair $(A^*,E^*) \in \R^{m\times m} \times \R^{m \times m}$. We
generate the rank-$r$ matrix $A^*$ as a product $LR^T$, where $L$
and $R$ are independent $m \times r$ matrices whose elements are
i.i.d. Gaussian random variables with zero mean and unit
variance.\footnote{It can be shown that $A^*$ is distributed
according to the random orthogonal model of rank $r$, as defined
in \cite{Candes2008}.} We generate $E^*$ as a sparse matrix whose
support is chosen uniformly at random, and whose non-zero entries
are i.i.d. uniformly in the interval $[-500,500]$. The matrix $D
\doteq A^* + E^*$ is the input to the algorithm, and
($\hat{A},\hat{E})$ denotes the output. We choose a fixed
weighting parameter $\lambda = m^{-1/2}$ for a given problem.

We use the latest version of the code for Algorithm \ref{alg:apg},
provide by the authors of \cite{Lin2009}, and also apply the
prediction rule (\ref{eqn:predict}), with $\mbox{sv}_{0}=5$, to it
so that the partial SVD can be utilized\footnote{Such a prediction
scheme was not proposed in \cite{Lin2009}. So the full SVD was
used therein.}. With the partial SVD, APG is faster than the dual
approach in Section \ref{sec:dual}. So we need not involve the
dual approach for comparison.

A brief comparison of the three algorithms is presented in Tables
\ref{tab:rpca_compare1} and \ref{tab:rpca_compare2}. We can see
that both APG and IALM algorithms stop at relatively constant
iteration numbers and IALM is at least five times faster than APG.
Moreover, the accuracies of EALM and IALM are higher than that of
APG. In particular, APG often over estimates $\|E^*\|_0$, the
number of non-zeros in $E^*$, quite a bit. While the estimated
$\|E^*\|_0$ by EALM and IALM are always extremely close to the
ground truth.

\paragraph{II. Comparison on the Matrix Completion Problem.}
For the MC problem, the true low-rank matrix $A^*$ is first
generated as that for the RPCA problem. Then we sample $p$
elements uniformly from $A^*$ to form the known samples in $D$. A
useful quantity for reference is $d_r=r(2m-r)$, which is the
number of degrees of freedom in an $m\times m$ matrix of rank $r$
\cite{Toh2009}.

The SVT and APGL (APG with line search\footnote{For the MC
problem, APGL is faster than APG without line search. However, for
the RPCA problem, APGL is not faster than APG \cite{Lin2009}.})
codes are provided by the authors of \cite{Cai2008} and
\cite{Toh2009}, respectively. A brief comparison of the three
algorithms is presented in Table \ref{tab:mc_compare2}. One can
see that IALM is always faster than SVT. It is also advantageous
over APGL when the sampling density $p/m^2$ is relatively high,
e.g., $p/m^2>10\%$. This phenomenon is actually consistent with
the results on the RPCA problem, where most samples of $D$ are
assumed accurate, although the positions of accurate samples are
not known apriori.

\section{Conclusions}
\label{sec:Conclusions} In this paper, we have proposed two
augmented Lagrange multiplier based algorithms, namely EALM and
IALM, for solving the Robust PCA problem \eqref{eqn:rpca}. Both
algorithms are faster than the previous state-of-the-art APG
algorithm \cite{Lin2009}. In particular, in all simulations IALM
is consistently over five times faster than APG.

We have also applied the method of augmented Lagrange multiplier
to the matrix completion problem. The corresponding IALM algorithm
is considerably faster than the famous SVT algorithm \cite{Cai2008}.
It is also faster than the state-of-the-art APGL algorithm
\cite{Toh2009} when the percentage of available entries is not too
low, say $> 10\%$.

Compared to accelerated proximal gradient based methods, augmented
Lagrange multiplier based algorithms are simpler to analyze and
easier to implement. Moreover, they are also of much higher
accuracy as the iterations are proven to converge to the exact
solution of the problem, even if the penalty parameter does not
approach infinity \cite{Bertsekas-LM}. In contrast, APG methods
normally find a close approximation to the solution by solving a
relaxed problem. Finally, ALM algorithms require less
storage/memory than APG for both the RPCA and MC
problems\footnote{By smart reuse of intermediate matrices (and
accordingly the codes become hard to read), for the RPCA problem
APG still needs one more intermediate (dense) matrix than IALM;
for the MC problem, APG needs two more low rank matrices (for
representing $A_{k-1}$) and one more sparse matrix than IALM. Our
numerical simulation testifies this too: for the MC problem, on
our workstation IALM was able to handle $A^*$ with size
$10^4\times 10^4$ and rank $10^2$, while APG could not.}. For
large-scale applications, such as web data analysis, this could
prove to be a big advantage for ALM type algorithms.

To help the reader to compare and use all the algorithms, we have
posted our Matlab code of all the algorithms at the website:
\begin{quote}
\centerline{\url{http://perception.csl.illinois.edu/matrix-rank/home.html}}
\end{quote}

\begin{table}[p]
\begin{center}
\begin{tabular}{|r|r|rrr|rr|}
\hline
{$m$}    & algorithm & {$\frac{\| \hat A - A^* \|_F}{\|A^*\|_F}$} & rank$(\hat A)$ & $\|\hat E\|_0$  & \#SVD  & time (s)\\
\hline
\multicolumn{6}{c}{} \\
\hline
\multicolumn{7}{|c|}{rank($A^*$) $= 0.05\,m$, $\|E^*\|_0 = 0.05\,m^2$} \\
\hline \hline
500 & APG  & 1.12e-5 & 25 & 12542 & 127 & 11.01 \\
    & EALM  & 3.99e-7 & 25 & 12499 & 28 & 4.08 \\
    & IALM & 5.21e-7 & 25 & 12499 & 20 & 1.72  \\
\hline
800 & APG  & 9.84e-6 & 40 & 32092 & 126 & 37.21 \\
    & EALM  & 1.47e-7 & 40 & 32002 & 29 & 18.59 \\
    & IALM & 3.29e-7 & 40 & 31999 & 21 & 5.87  \\
\hline
1000 & APG  & 8.79e-6 & 50 & 50082 & 126 & 57.62 \\
     & EALM  & 7.85e-8 & 50 & 50000 & 29 & 33.28 \\
     & IALM & 2.67e-7 & 50 & 49999 & 22 & 10.13  \\
\hline
1500 & APG  & 7.16e-6 & 75 & 112659 & 126 & 163.80 \\
     & EALM  & 7.55e-8 & 75 & 112500 & 29 & 104.97 \\
     & IALM & 1.86e-7 & 75 & 112500 & 22 & 30.80  \\
\hline
2000 & APG  & 6.27e-6 & 100 & 200243 & 126 & 353.63 \\
     & EALM  & 4.61e-8 & 100 & 200000 & 30 & 243.64 \\
     & IALM & 9.54e-8 & 100 & 200000 & 22 & 68.69  \\
\hline
3000 & APG  & 5.20e-6 & 150 & 450411 & 126 & 1106.22 \\
     & EALM  & 4.39e-8 & 150 & 449998 & 30 & 764.66 \\
     & IALM & 1.49e-7 & 150 & 449993 & 22 & 212.34  \\
\hline

\hline
\multicolumn{6}{c}{} \\
\hline
\multicolumn{7}{|c|}{rank($A^*$) $= 0.05\,m$, $\|E^*\|_0 = 0.10\,m^2$} \\
\hline \hline
500 & APG  & 1.41e-5 & 25 & 25134 & 129 & 14.35 \\
    & EALM  & 8.72e-7 & 25 & 25009 & 34 & 4.75 \\
    & IALM & 9.31e-7 & 25 & 25000 & 21 & 2.52  \\
\hline
800 & APG  & 1.12e-5 & 40 & 64236 & 129 & 37.94 \\
    & EALM  & 2.86e-7 & 40 & 64002 & 34 & 20.30 \\
    & IALM & 4.87e-7 & 40 & 64000 & 24 & 6.69  \\
\hline
1000 & APG  & 9.97e-6 & 50 & 100343 & 129 & 65.41 \\
     & EALM  & 6.07e-7 & 50 & 100002 & 33 & 30.63 \\
     & IALM & 3.78e-7 & 50 & 99996 & 22 & 10.77  \\
\hline
1500 & APG  & 8.18e-6 & 75 & 225614 & 129 & 163.36 \\
     & EALM  & 1.45e-7 & 75 & 224999 & 33 & 109.54 \\
     & IALM & 2.79e-7 & 75 & 224996 & 23 & 35.71  \\
\hline
2000 & APG  & 7.11e-6 & 100 & 400988 & 129 & 353.30 \\
     & EALM  & 1.23e-7 & 100 & 400001 & 34 & 254.77 \\
     & IALM & 3.31e-7 & 100 & 399993 & 23 & 70.33  \\
\hline
3000 & APG  & 5.79e-6 & 150 & 901974 & 129 & 1110.76 \\
     & EALM  & 1.05e-7 & 150 & 899999 & 34 & 817.69 \\
     & IALM & 2.27e-7 & 150 & 899980 & 23 & 217.39  \\
\hline

\end{tabular}
\end{center}
\caption{{\bf Comparison between APG, EALM and IALM on the Robust
PCA problem.} We present typical running times for randomly
generated matrices. Corresponding to each triplet \{$m$,
rank($A^*$), $\|E^*\|_0$\}, the RPCA problem was solved for the
same data matrix $D$ using three different algorithms. For APG and
IALM, the number of SVDs is equal to the number of
iterations.}\label{tab:rpca_compare1}
\end{table}

\begin{table}[p]
\begin{center}
\begin{tabular}{|r|r|rrr|rr|}
\hline
{$m$}    & algorithm & {$\frac{\| \hat A - A^* \|_F}{\|A^*\|_F}$} & rank$(\hat A)$ & $\|\hat E\|_0$  & \#SVD  & time (s)\\
\hline
\multicolumn{6}{c}{} \\
\hline
\multicolumn{7}{|c|}{rank($A^*$) $= 0.10\,m$, $\|E^*\|_0 = 0.05\,m^2$} \\
\hline \hline
500 & APG  & 9.36e-6 & 50 & 13722 & 129 & 13.99 \\
    & EALM  & 5.53e-7 & 50 & 12670 & 41 & 7.35 \\
    & IALM & 6.05e-7 & 50 & 12500 & 22 & 2.32  \\
\hline
800 & APG  & 7.45e-6 & 80 & 34789 & 129 & 67.54 \\
    & EALM  & 1.13e-7 & 80 & 32100 & 40 & 30.56 \\
    & IALM & 3.08e-7 & 80 & 32000 & 22 & 10.81  \\
\hline
1000 & APG  & 6.64e-6 & 100 & 54128 & 129 & 129.40 \\
     & EALM  & 4.20e-7 & 100 & 50207 & 39 & 50.31 \\
     & IALM & 2.61e-7 & 100 & 50000 & 22 & 20.71 \\
\hline
1500 & APG  & 5.43e-6 & 150 & 121636 & 129 & 381.52 \\
     & EALM  & 1.22e-7 & 150 & 112845 & 41 & 181.28 \\
     & IALM & 1.76e-7 & 150 & 112496 & 24 & 67.84  \\
\hline
2000 & APG  & 4.77e-6 & 200 & 215874 & 129 & 888.93 \\
     & EALM  & 1.15e-7 & 200 & 200512 & 41 & 423.83 \\
     & IALM & 2.49e-7 & 200 & 199998 & 23 & 150.35  \\
\hline
3000 & APG  & 3.98e-6 & 300 & 484664 & 129 & 2923.90 \\
     & EALM  & 7.92e-8 & 300 & 451112 & 42 & 1444.74 \\
     & IALM & 1.30e-7 & 300 & 450000 & 23 & 485.70  \\
\hline

\hline
\multicolumn{6}{c}{} \\
\hline
\multicolumn{7}{|c|}{rank($A^*$) $= 0.10\,m$, $\|E^*\|_0 = 0.10\,m^2$} \\
\hline \hline
500 & APG  & 9.78e-6 & 50 & 27478 & 133 & 13.90 \\
    & EALM  & 1.14e-6 & 50 & 26577 & 52 & 9.46 \\
    & IALM & 7.64e-7 & 50 & 25000 & 25 & 2.62  \\
\hline
800 & APG  & 8.66e-6 & 80 & 70384 & 132 & 68.12 \\
    & EALM  & 3.59e-7 & 80 & 66781 & 51 & 41.33 \\
    & IALM & 4.77e-7 & 80 & 64000 & 25 & 11.88  \\
\hline
1000 & APG  & 7.75e-6 & 100 & 109632 & 132 & 130.37 \\
     & EALM  & 3.40e-7 & 100 & 104298 & 49 & 77.26 \\
     & IALM & 3.73e-7 & 100 & 99999 & 25 & 22.95  \\
\hline
1500 & APG  & 6.31e-6 & 150 & 246187 & 132 & 383.28 \\
     & EALM  & 3.55e-7 & 150 & 231438 & 49 & 239.62 \\
     & IALM & 5.42e-7 & 150 & 224998 & 24 & 66.78  \\
\hline
2000 & APG  & 5.49e-6 & 200 & 437099 & 132 & 884.86 \\
     & EALM  & 2.81e-7 & 200 & 410384 & 51 & 570.72 \\
     & IALM & 4.27e-7 & 200 & 399999 & 24 & 154.27  \\
\hline
3000 & APG  & 4.50e-6 & 300 & 980933 & 132 & 2915.40 \\
     & EALM  & 2.02e-7 & 300 & 915877 & 51 & 1904.95 \\
     & IALM & 3.39e-7 & 300 & 899990 & 24 & 503.05 \\
\hline

\end{tabular}
\end{center}
\caption{{\bf Comparison between APG, EALM and IALM on the Robust
PCA problem.} Continued from Table \ref{tab:rpca_compare2} with
different parameters of \{$m$, rank($A^*$),
$\|E^*\|_0$\}.}\label{tab:rpca_compare2}
\end{table}

\begin{table}[p!]
\begin{center}
\begin{tabular}{|rrrr|r|rrrr|}
\hline
{$m$} & $r$ & $p/d_r$ & $p/m^2$ & algorithm & \#iter & rank$(\hat A)$   & time (s) & {$\frac{\| \hat A - A^* \|_F}{\|A^*\|_F}$} \\
\hline \hline
1000 & 10 & 6 & 0.12 & SVT  & 208 & 10 & 18.23 & 1.64e-6 \\
     &    &   &      & APGL & 69 & 10 & 4.46 & 3.16e-6 \\
     &    &   &      & IALM & 69 & 10 & 3.73 & 1.40e-6 \\
\hline
1000 & 50 & 4 & 0.39 & SVT  & 201 & 50 & 126.18 & 1.61e-6 \\
     &    &   &      & APGL & 76 & 50 & 24.54 & 4.31e-6 \\
     &    &   &      & IALM & 38 & 50 & 12.68 & 1.53e-6 \\
\hline
1000 & 100 & 3 & 0.57 & SVT  & 228 & 100 & 319.93 & 1.71e-6  \\
     &     &   &      & APGL & 81 & 100 & 70.59 & 4.40e-6  \\
     &     &   &      & IALM & 41 & 100 & 42.94 & 1.54e-6 \\
\hline
3000 & 10 & 6 & 0.04 & SVT  & 218 & 10 & 70.14 & 1.77e-6 \\
     &    &   &      & APGL & 88 & 10 & 15.63 & 2.33e-6 \\
     &    &   &      & IALM & 131 & 10 & 27.18 & 1.41e-6 \\
\hline
3000 & 50 & 5 & 0.165 & SVT & 182 & 50 & 370.13 & 1.58e-6 \\
     &    &   &      & APGL & 78 & 50 & 101.04 & 5.74e-6 \\
     &    &   &      & IALM & 57 & 50 & 82.68 & 1.31e-6 \\
\hline
3000 & 100 & 4 & 0.26 & SVT & 204 & 100 & 950.01 & 1.68e-6 \\
     &    &   &      & APGL & 82 & 100 & 248.16 & 5.18e-6 \\
     &     &   &     & IALM & 50 & 100 & 188.22 & 1.52e-6  \\
\hline
5000 & 10 & 6 & 0.024 & SVT & 231 & 10 & 141.88 & 1.79e-6 \\
     &    &   &      & APGL & 81 & 10 & 30.52 & 5.26e-6 \\
     &    &   &      & IALM & 166 & 10 & 68.38 & 1.37e-6 \\
\hline
5000 & 50 & 5 & 0.10 & SVT & 188 & 50 & 637.97 & 1.62e-6 \\
     &    &   &      & APGL & 88 & 50 & 208.08 & 1.93e-6 \\
     &    &   &      & IALM & 79 & 50 & 230.73 & 1.30e-6 \\
\hline
5000 & 100 & 4 & 0.158 & SVT & 215 & 100 & 2287.72 & 1.72e-6 \\
     &     &   &      & APGL & 98 & 100 & 606.82 & 4.42e-6 \\
     &     &   &      & IALM & 64 & 100 & 457.79 & 1.53e-6 \\
\hline
8000 & 10 & 6 & 0.015 & SVT & 230 & 10 & 283.94 & 1.86e-6 \\
     &    &   &      & APGL & 87 & 10 & 66.45 & 5.27e-6 \\
     &    &   &      & IALM & 235 & 10 & 186.73 & 2.08e-6 \\
\hline
8000 & 50 & 5 & 0.06 & SVT & 191 & 50 & 1095.10 & 1.61e-6 \\
     &    &   &      & APGL & 100 & 50 & 509.78 & 6.16e-6 \\
     &    &   &      & IALM & 104 & 50 & 559.22 & 1.36e-6 \\
\hline
10000 & 10 & 6 & 0.012 & SVT & 228 & 10 & 350.20 & 1.80e-6  \\
      &    &   &      & APGL & 89  & 10 & 96.10 & 5.13e-6 \\
      &    &   &      & IALM & 274 & 10 & 311.46 & 1.96e-6 \\
\hline
10000 & 50 & 5 & 0.05 & SVT & 192 & 50 & 1582.95 & 1.62e-6 \\
      &    &   &      & APGL & 105 & 50 & 721.96 & 3.82e-6 \\
      &    &   &      & IALM & 118 & 50 & 912.61 & 1.32e-6 \\
\hline
\end{tabular}
\end{center}
\caption{{\bf Comparison between SVT, APG and IALM on the matrix
completion problem.} We present typical running times for randomly
generated matrices. Corresponding to each triplet \{$m$,
rank($A^*$), $p/d_r$\}, the MC problem was solved for the same
data matrix $D$ using the three different algorithms.
}\label{tab:mc_compare2}
\end{table}

\begin{acknowledgements}
%If you'd like to thank anyone, place your comments here
%and remove the percent signs.
We thank Leqin WU for contributing to the second part of the proof
of Theorem \ref{thm:exact_ALM} and helping draft the early version
of the paper. We thank the authors of \cite{Toh2009} for kindly
sharing with us their code of APG and APGL for matrix completion.
We would also like to thank Arvind Ganesh of UIUC and Dr. John
Wright of MSRA for providing the code of APG for matrix recovery.
\end{acknowledgements}

\appendix

\section{Proofs and Technical Details for Section
\ref{sec:ALM}}

In this appendix, we provide the mathematical details in Section
\ref{sec:ALM}. To prove Theorems \ref{thm:exact_ALM} and
\ref{thm:inexact_ALM}, we have to prepare some results in Sections
\ref{sec:duality} and \ref{sec:boundedness}.

\subsection{Relationship between Primal and Dual Norms}
\label{sec:duality} Our convergence theorems require the
boundedness of some sequences, which results from the following
theorem.
\begin{theorem} \label{thm:dual_norm} Let $\mathcal{H}$ be
a real Hilbert space endowed with an inner product $\langle
\cdot,\cdot \rangle$ and a corresponding norm $\|\cdot\|$, and $y
\in
\partial \|x\|$, where $\partial f(x)$ is the subgradient of $f(x)$. Then $\|y\|^*=1$ if $x\neq 0$, and $\|y\|^*\leq 1$ if $x= 0$, where $\|\cdot\|^*$ is the dual norm of
$\|\cdot\|$.
\end{theorem}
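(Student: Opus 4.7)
The plan is to work directly from the defining subgradient inequality
\[
\|z\| \;\ge\; \|x\| + \langle y, z-x\rangle \qquad \text{for all } z\in\mathcal{H},
\]
and the definition $\|y\|^* = \sup_{\|z\|\le 1}\langle y,z\rangle$. Essentially the result follows from (a) a universal upper bound via a translation argument, and (b) a matching lower bound obtained by plugging in $z = x/\|x\|$ when $x\ne 0$.

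First I would dispose of the case $x=0$. The subgradient inequality collapses to $\|z\|\ge\langle y,z\rangle$ for every $z$, and taking the supremum over the unit ball immediately yields $\|y\|^*\le 1$.

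Next, for $x\ne 0$, I would establish the two-sided bound in two short steps. For the upper bound $\|y\|^*\le 1$, I apply the subgradient inequality with the substitution $z \mapsto x+w$, which gives $\|x+w\| \ge \|x\| + \langle y,w\rangle$, whence
\[
\langle y,w\rangle \;\le\; \|x+w\|-\|x\| \;\le\; \|w\|
\]
for all $w\in\mathcal{H}$; taking the supremum over $\|w\|\le 1$ yields $\|y\|^*\le 1$. For the matching lower bound, I need the identity $\langle y,x\rangle = \|x\|$. Setting $z=2x$ in the subgradient inequality gives $2\|x\|\ge\|x\|+\langle y,x\rangle$, i.e.\ $\langle y,x\rangle\le\|x\|$, and setting $z=0$ gives $0\ge\|x\|-\langle y,x\rangle$, i.e.\ $\langle y,x\rangle\ge\|x\|$; together these force equality. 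Then evaluating at $z=x/\|x\|$ (which has unit norm) produces $\langle y,z\rangle = 1$, so $\|y\|^*\ge 1$.

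Combining the two bounds gives $\|y\|^*=1$ whenever $x\ne 0$, completing the proof. There is no real obstacle here beyond being careful about which direction of the subgradient inequality to invoke; the only mildly delicate point is recognizing that the translation $z\mapsto x+w$ converts the subgradient inequality into precisely the form $\langle y,w\rangle\le\|w\|$ that characterizes the dual-norm unit ball. Everything else is a direct consequence of the positive homogeneity of $\|\cdot\|$ and the definition of the dual norm.
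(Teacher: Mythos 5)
Your proof is correct and follows essentially the same route as the paper's: both use the substitutions $z=0$ and $z=2x$ to extract $\langle y,x\rangle=\|x\|$ for the lower bound, and both use the triangle inequality on a translated form of the subgradient inequality for the upper bound $\|y\|^*\le 1$, with the $x=0$ case handled identically. The only cosmetic difference is that you obtain $\|y\|^*\ge 1$ by evaluating at the unit vector $x/\|x\|$, whereas the paper invokes $\langle y,x\rangle\le\|x\|\,\|y\|^*$; these are interchangeable.
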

\begin{proof} As $y \in \partial \|x\|$, we have
\begin{equation}\label{eqn:subgradient1}
\|w\|-\|x\| \geq \< y,w-x \>, \quad \forall~ w\in \mathcal{H}.
\end{equation}
If $x\neq 0$, choosing $w=0,2x$, we can deduce that
\begin{equation}\label{eqn:equal}
\|x\| = \< y,x \> \leq \|x\|\|y\|^*.
\end{equation}
So $\|y\|^* \geq 1$. On the other hand, we have
\begin{equation}
 \|w-x\| \geq \|w\|-\|x\| \geq \< y,w-x \>,\quad
 \forall~w\in \mathcal{H}.
\end{equation}
So
$$
\< y,\frac{w-x}{\|w-x\|} \> \leq 1,\quad\forall~w \neq x.
$$
Therefore $\|y\|^* \leq 1$. Then we conclude that $\|y\|^*=1$.

If $x=0$, then (\ref{eqn:subgradient1}) is equivalent to
\begin{equation}\label{eqn:subgradient2}
\< y,w \> \leq 1, \quad \forall~ \|w\|=1.
\end{equation}
By the definition of dual norm, this means that $\|y\|^*\leq 1$.
\qquad \end{proof}

\subsection{Boundedness of Some Sequences} \label{sec:boundedness}
With Theorem \ref{thm:dual_norm}, we can prove the following
lemmas.
\begin{lemma}\label{lemma:bound_Y}
The sequences $\{Y_k^*\}$, $\{Y_k\}$ and $\{\hat{Y}_{k}\}$ are all
bounded, where $\hat{Y}_{k}=Y_{k-1}+\mu_{k-1}(D-A_{k}-E_{k-1})$.
\end{lemma}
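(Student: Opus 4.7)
The plan is to derive boundedness of each sequence directly from the first-order optimality conditions of the subproblems that define the iterates, and then convert these subgradient inclusions into uniform norm bounds via Theorem \ref{thm:dual_norm}. The key fact I will use is that the nuclear norm $\|\cdot\|_*$ has dual $\|\cdot\|_2$ (operator norm) and the $\ell^1$-norm has dual $\|\cdot\|_\infty$; thus membership of any $W$ in $\partial \|X\|_*$ forces $\|W\|_2 \le 1$, and membership in $\lambda\,\partial \|X\|_1$ forces $\|W\|_\infty \le \lambda$. Since the dimensions $m,n$ are fixed, both bounds imply a uniform Frobenius-norm bound, which is all I need.

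For the exact ALM sequence $\{Y_k^*\}$, since $(A_{k+1}^*, E_{k+1}^*)$ is the exact joint minimizer of $L(\cdot,\cdot,Y_k^*,\mu_k)$, both partial subdifferential inclusions share the same multiplier
\[
Y_k^* + \mu_k(D - A_{k+1}^* - E_{k+1}^*) \;=\; Y_{k+1}^*,
\]
giving simultaneously $Y_{k+1}^* \in \partial \|A_{k+1}^*\|_*$ and $Y_{k+1}^* \in \lambda\,\partial \|E_{k+1}^*\|_1$. Applying Theorem \ref{thm:dual_norm} to each yields $\|Y_{k+1}^*\|_2 \le 1$ and $\|Y_{k+1}^*\|_\infty \le \lambda$, hence $\{Y_k^*\}$ is bounded in Frobenius norm; the initial value $Y_0^* = \sgn(D)/J(\sgn(D))$ satisfies the same bounds by construction of $J(\cdot)$.

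For the inexact ALM sequences, the alternating (rather than joint) updates mean the two optimality inclusions involve \emph{different} multipliers, which is precisely why $\hat{Y}_k$ must be tracked separately. Optimality of $A_{k+1}=\arg\min_A L(A,E_k,Y_k,\mu_k)$ gives $Y_k + \mu_k(D-A_{k+1}-E_k)\in \partial\|A_{k+1}\|_*$, and the left-hand side is exactly $\hat{Y}_{k+1}$; Theorem \ref{thm:dual_norm} then yields $\|\hat{Y}_{k+1}\|_2 \le 1$. Optimality of $E_{k+1}=\arg\min_E L(A_{k+1},E,Y_k,\mu_k)$ gives $Y_k+\mu_k(D-A_{k+1}-E_{k+1})=Y_{k+1}\in \lambda\,\partial \|E_{k+1}\|_1$, whence $\|Y_{k+1}\|_\infty \le \lambda$. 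The initialization $Y_0=D/J(D)$ is bounded for the same reason as in the exact case.

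The main obstacle is not analytical but notational: one must correctly identify which of $Y_k$, $\hat{Y}_{k+1}$, or $Y_{k+1}$ plays the role of the Lagrange multiplier in each subgradient inclusion, and observe that for the exact method both inclusions produce the same $Y_{k+1}^*$, whereas for the inexact method the $A$-inclusion produces $\hat{Y}_{k+1}$ and the $E$-inclusion produces $Y_{k+1}$. Once this bookkeeping is set up, the boundedness of all three sequences follows immediately from Theorem \ref{thm:dual_norm}, with uniform constants that depend only on $\lambda$ and on $m,n$.
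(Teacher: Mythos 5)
Your proof is correct and follows essentially the same route as the paper: read off the subgradient inclusions from the optimality conditions of each subproblem, identify the resulting multiplier ($Y_{k+1}^*$ in the exact case, $\hat{Y}_{k+1}$ and $Y_{k+1}$ in the inexact case), and apply Theorem \ref{thm:dual_norm} with the dual-norm pairs $(\|\cdot\|_*,\|\cdot\|_2)$ and $(\lambda\|\cdot\|_1,\lambda^{-1}\|\cdot\|_\infty)$. The only difference is that you spell out the inexact-ALM bookkeeping that the paper dismisses with ``can be proved similarly,'' which is a welcome clarification rather than a deviation.
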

\begin{proof}
By the optimality of $A_{k+1}^*$ and $E_{k+1}^*$ we have that:
\begin{equation}
0\in \partial_A L(A_{k+1}^*,E_{k+1}^*,Y_k^*,\mu_k),\quad 0\in
\partial_E L(A_{k+1}^*,E_{k+1}^*,Y_k^*,\mu_k),
\end{equation}
i.e.,
\begin{equation}
\begin{array}{l}
0\in \partial \|A_{k+1}^*\|_* - Y_k^* -
\mu_k(D-A_{k+1}^*-E_{k+1}^*),\\ 0\in
\partial \left(\|\lambda E_{k+1}^*\|_1\right) - Y_k^* -
\mu_k(D-A_{k+1}^*-E_{k+1}^*).
\end{array}
\end{equation}
So we have that
\begin{equation}
Y_{k+1}^*\in \partial \|A_{k+1}^*\|_*, \quad Y_{k+1}^* \in
\partial \left(\|\lambda E_{k+1}^*\|_1\right).
\end{equation}
Then by Theorem \ref{thm:dual_norm} the sequences $\{Y_k^*\}$ is
bounded\footnote{A stronger result is that
$\|Y_k^*\|_2=\lambda^{-1}\|Y_k^*\|_\infty =1$ if $A_{k}^*\neq 0$
and $E_{k}^* \neq 0$.} by observing the fact that the dual norms
of $\|\cdot\|_*$ and $\|\cdot\|_1$ are $\|\cdot\|_2$ and
$\|\cdot\|_\infty$ \cite{Cai2008,Lin2009}, respectively. The
boundedness of $\{Y_k\}$ and $\{\hat{Y}_{k}\}$ can be proved
similarly. \qquad
\end{proof}

\subsection{Proof of Theorem
\ref{thm:exact_ALM}}\label{sec:exact_ALM} \
\begin{proof} By
\begin{equation}
\begin{array}{rcl}
L(A_{k+1}^*,E_{k+1}^*,Y_k^*,\mu_k)&=&\min\limits_{A,E}
L(A,E,Y_k^*,\mu_k)\\
&\leq &\min\limits_{A+E=D}
L(A,E,Y_k^*,\mu_k)\\
&=&\min\limits_{A+E=D}\left(\|A\|_*+\lambda \|E\|_1\right)=f^*,
\end{array}
\end{equation}
we have
\begin{equation}
\begin{array}{rcl}
&&\|A_{k+1}^*\|_*+\lambda \|E_{k+1}^*\|_1\\
&=&L(A_{k+1}^*,E_{k+1}^*,Y_k^*,\mu_k) -
\dfrac{1}{2\mu_k}\left(\|Y_{k+1}^*\|_F^2 - \|Y_k^*\|_F^2\right)\\
&\leq & f^*-\dfrac{1}{2\mu_k}\left(\|Y_{k+1}^*\|_F^2 -
\|Y_k^*\|_F^2\right).
\end{array}
\end{equation}
By the boundedness of $\{Y_{k}^*\}$, we see that
\begin{equation}\label{eqn:upper_bound}
\|A_{k+1}^*\|_*+\lambda \|E_{k+1}^*\|_1 \leq f^*+O(\mu_k^{-1}).
\end{equation}
By letting $k \rightarrow +\infty$, we have that
\begin{equation}
\|A^*\|_*+\lambda \|E^*\|_1 \leq f^*.
\end{equation}
As $D - A_{k+1}^* - E_{k+1}^* = \mu_k^{-1}(Y_{k+1}^* - Y_{k}^*)$,
by the boundedness of $Y_{k}^*$ and letting $k \rightarrow
+\infty$ we see that
\begin{equation}
A^*+E^*=D.
\end{equation}
Therefore, $(A^*,E^*)$ is an optimal solution to the RPCA problem.

On the other hand, by the triangular inequality of norms,
\begin{equation}
\begin{array}{rcl}
\|A_{k+1}^*\|_*+\lambda \|E_{k+1}^*\|_1 &\geq& \|D-E_{k+1}^*\|_* +
\lambda \|E_{k+1}^*\|_1 - \|D-A_{k+1}^* - E_{k+1}^*\|_*\\
&\geq & f^* - \|D-A_{k+1}^* - E_{k+1}^*\|_*\\
&=&f^* - \mu_k^{-1}\|Y_{k+1}^* - Y_{k}^*\|_*.
\end{array}
\end{equation}
So
\begin{equation} \|A_{k+1}^*\|_*+\lambda \|E_{k+1}^*\|_1 \geq f^*
- O(\mu_k^{-1}).
\end{equation}
This together with (\ref{eqn:upper_bound}) proves the convergence
rate. \qquad \end{proof}

\subsection{Key Lemmas}\label{sec:recursive_relationship} First, we have the following
identity.
\begin{lemma}\label{lemma:recursive_relationship}
\begin{eqnarray}
\begin{array}{rl}
&\|E_{k+1}-E^*\|_F^2+\mu_{k}^{-2}\|Y_{k+1}-Y^*\|_F^2 \\
= &
\|E_{k}-E^*\|_F^2+\mu_{k}^{-2}\|Y_{k}-Y^*\|_F^2-\|E_{k+1}-E_k\|_F^2-
\mu_k^{-2} \|Y_{k+1}-Y_k\|_F^2\\
& -2\mu_k^{-1}(\langle Y_{k+1}-Y_k,E_{k+1}-E_k\rangle+\langle
A_{k+1}-A^*,\hat{Y}_{k+1}-Y^*\rangle+\langle
E_{k+1}-E^*,Y_{k+1}-Y^*\rangle),
\end{array}\label{eqn:identity}
\end{eqnarray}
where $(A^*,E^*)$ and $Y^*$ are the optimal solutions to the RPCA
problem (\ref{eqn:rpca}) and the dual problem (\ref{eqn:Dual}),
respectively.
\end{lemma}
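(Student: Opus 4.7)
The plan is to prove this identity by direct algebraic expansion, using just two structural facts. First, the dual update rule of Algorithm \ref{alg:inexact_ALM} gives $\mu_k^{-1}(Y_{k+1}-Y_k) = D - A_{k+1} - E_{k+1}$, and combined with primal feasibility $A^* + E^* = D$ this yields the key relation
\begin{equation*}
\mu_k^{-1}(Y_{k+1}-Y_k) = (A^* - A_{k+1}) + (E^* - E_{k+1}).
\end{equation*}
Second, subtracting the definitions $\hat{Y}_{k+1} = Y_k + \mu_k(D - A_{k+1} - E_k)$ and $Y_{k+1} = Y_k + \mu_k(D - A_{k+1} - E_{k+1})$ gives $\hat{Y}_{k+1} - Y_{k+1} = \mu_k(E_{k+1} - E_k)$. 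No convex-analysis input is needed; this is an identity among points.

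I would open by applying the standard cosine rule $\|u-w\|_F^2 - \|v-w\|_F^2 = -\|u-v\|_F^2 + 2\langle u-v,\, u-w\rangle$ twice: once with $(u,v,w) = (E_{k+1}, E_k, E^*)$, and once with $(u,v,w) = (Y_{k+1}, Y_k, Y^*)$ weighted by $\mu_k^{-2}$. This already accounts for the $-\|E_{k+1}-E_k\|_F^2$ and $-\mu_k^{-2}\|Y_{k+1}-Y_k\|_F^2$ terms on the right, and reduces the problem to showing that
\begin{equation*}
\langle E_{k+1}-E_k,\, E_{k+1}-E^*\rangle + \mu_k^{-2}\langle Y_{k+1}-Y_k,\, Y_{k+1}-Y^*\rangle
\end{equation*}
equals $-\mu_k^{-1}$ times the bracketed sum on the right-hand side of the claim.

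For this remaining equality, I would split $\hat{Y}_{k+1}-Y^* = (Y_{k+1}-Y^*) + \mu_k(E_{k+1}-E_k)$ inside $\langle A_{k+1}-A^*, \hat{Y}_{k+1}-Y^*\rangle$, then pair $\mu_k^{-1}\langle A_{k+1}-A^*, Y_{k+1}-Y^*\rangle$ with $\mu_k^{-1}\langle E_{k+1}-E^*, Y_{k+1}-Y^*\rangle$ and use the displayed key relation to collapse them to $-\mu_k^{-2}\langle Y_{k+1}-Y_k, Y_{k+1}-Y^*\rangle$; this cancels the $Y$-cross term from the cosine step. What is left to check reduces to
\begin{equation*}
\langle E_{k+1}-E_k,\, E_{k+1}-E^*\rangle + \mu_k^{-1}\langle Y_{k+1}-Y_k,\, E_{k+1}-E_k\rangle + \langle A_{k+1}-A^*,\, E_{k+1}-E_k\rangle = 0,
\end{equation*}
which follows upon substituting the key relation into the middle term: the $A$-parts cancel against the third term and the $E$-parts combine with the first term to give $\langle E_{k+1}-E_k, 0\rangle$.

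There is no conceptual obstacle; the entire argument is careful bookkeeping of six cross terms, three factors of $\mu_k^{-1}$, and signs. The main risk is a scaling slip, so I would track the $\mu_k$-powers at each substitution. Once the cancellations above are verified, the identity is established exactly as stated.
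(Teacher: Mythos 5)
Your proposal is correct and follows essentially the same route as the paper's proof: both are direct algebraic verifications that combine the cosine rule with the two update identities $\mu_k^{-1}(Y_{k+1}-Y_k)=D-A_{k+1}-E_{k+1}=(A^*-A_{k+1})+(E^*-E_{k+1})$ and $\hat{Y}_{k+1}-Y_{k+1}=\mu_k(E_{k+1}-E_k)$, differing only in the order in which the cross terms are collapsed. All the stated cancellations check out, so nothing further is needed.
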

\begin{proof} The identity can be routinely checked.
Using $A^*+E^*=D$ and $D-A_{k+1}-E_{k+1}=\mu_k^{-1}(Y_{k+1}-Y_k)$,
we have
\begin{eqnarray}
\begin{array}{rl}
&\mu_k^{-1}\langle Y_{k+1}-Y_k,Y_{k+1}-Y^*\rangle \\
=&-\langle A_{k+1}-A^*,Y_{k+1}-Y^*\rangle - \langle
E_{k+1}-E^*,Y_{k+1}-Y^*\rangle\\
=& \langle A_{k+1}-A^*,\hat{Y}_{k+1}-Y_{k+1}\rangle - \langle
A_{k+1}-A^*,\hat{Y}_{k+1}-Y^*\rangle- \langle
E_{k+1}-E^*,Y_{k+1}-Y^*\rangle\\
=&\mu_k  \langle A_{k+1}-A^*,E_{k+1}-E_{k}\rangle - \langle
A_{k+1}-A^*,\hat{Y}_{k+1}-Y^*\rangle- \langle
E_{k+1}-E^*,Y_{k+1}-Y^*\rangle.
\end{array}
\label{eqn:monotone1}
\end{eqnarray}
Then
\begin{eqnarray*}
&&\|E_{k+1}-E^*\|_F^2+\mu_k^{-2}\|Y_{k+1}-Y^*\|_F^2 \\
&=& (\|E_k-E^*\|_F^2 - \|E_{k+1}-E_k\|_F^2 +2\langle
E_{k+1}-E^*,E_{k+1}-E_k\rangle)\\
&&+\mu_k^{-2} (\|Y_{k}-Y^*\|_F^2-\|Y_{k+1}-Y_k\|_F^2+2\langle
Y_{k+1}-Y^*,Y_{k+1}-Y_k \rangle)\\
&= & \|E_k-E^*\|_F^2 +\mu_k^{-2} \|Y_{k}-Y^*\|_F^2 -
\|E_{k+1}-E_k\|_F^2-\mu_k^{-2}\|Y_{k+1}-Y_k\|_F^2\\
&&+2\langle E_{k+1}-E^*,E_{k+1}-E_k\rangle+2\mu_k^{-2} \langle
Y_{k+1}-Y^*,Y_{k+1}-Y_k \rangle\\
&=& \|E_k-E^*\|_F^2 +\mu_k^{-2} \|Y_{k}-Y^*\|_F^2 -
\|E_{k+1}-E_k\|_F^2-\mu_k^{-2}\|Y_{k+1}-Y_k\|_F^2\\
&&+2\langle E_{k+1}-E^*,E_{k+1}-E_k\rangle+2\langle A_{k+1}-A^*,E_{k+1}-E_k \rangle\\
&&-2\mu_k^{-1}(\langle
A_{k+1}-A^*,\hat{Y}_{k+1}-Y^*\rangle+\langle
E_{k+1}-E^*,Y_{k+1}-Y^*\rangle)\\
&=& \|E_k-E^*\|_F^2 +\mu_k^{-2} \|Y_{k}-Y^*\|_F^2 -
\|E_{k+1}-E_k\|_F^2-\mu_k^{-2}\|Y_{k+1}-Y_k\|_F^2\\
&&+2\langle A_{k+1}+E_{k+1}-D,E_{k+1}-E_k\rangle\\
&&-2\mu_k^{-1}(\langle
A_{k+1}-A^*,\hat{Y}_{k+1}-Y^*\rangle+\langle
E_{k+1}-E^*,Y_{k+1}-Y^*\rangle)\\
&=&\|E_k-E^*\|_F^2 +\mu_k^{-2} \|Y_{k}-Y^*\|_F^2 -
\|E_{k+1}-E_k\|_F^2-\mu_k^{-2}\|Y_{k+1}-Y_k\|_F^2\\
&&-2\mu_k^{-1}\langle Y_{k+1}-Y_k,E_{k+1}-E_k
\rangle-2\mu_k^{-1}(\langle
A_{k+1}-A^*,\hat{Y}_{k+1}-Y^*\rangle+\langle
E_{k+1}-E^*,Y_{k+1}-Y^*\rangle).\\
\end{eqnarray*}
\qquad
\end{proof}
We then quote a classic result.
\begin{lemma}\label{lemma:monotone}
The subgradient of a convex function is a monotone operator.
Namely, if $f$ is a convex function then
$$\langle x_1-x_2, g_1-g_2\rangle \geq 0, \quad \forall g_i\in \partial f(x_i),i=1,2.$$
\end{lemma}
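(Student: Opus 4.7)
The plan is to derive the inequality directly from the definition of the subgradient, since no machinery beyond the defining inequality is needed. By definition, $g_1 \in \partial f(x_1)$ means that for every $z$ in the domain,
\begin{equation*}
f(z) \geq f(x_1) + \langle g_1, z - x_1 \rangle.
\end{equation*}
I would instantiate this at $z = x_2$, and symmetrically use $g_2 \in \partial f(x_2)$ instantiated at $z = x_1$, obtaining a pair of inequalities in which the function values appear on opposite sides.

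Next I would simply add the two inequalities. The terms $f(x_1)$ and $f(x_2)$ cancel on both sides, leaving the purely linear inequality
\begin{equation*}
0 \geq \langle g_1, x_2 - x_1 \rangle + \langle g_2, x_1 - x_2 \rangle = -\langle g_1 - g_2, x_1 - x_2 \rangle,
\end{equation*}
which after rearrangement is exactly the monotonicity statement $\langle x_1 - x_2, g_1 - g_2 \rangle \geq 0$.

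There is essentially no obstacle here — the argument is a two-line consequence of convexity and requires no regularity assumptions on $f$ beyond convexity (not even finiteness at points off the domain, since the subgradient inequality already implicitly restricts to points where the relevant terms are well defined). The only thing worth noting is that the result holds for arbitrary convex $f$ on a real vector space with an inner product, so it applies in particular to the nuclear norm and the $\ell^1$ norm used later in the convergence analysis. Because the statement is classical and the paper itself merely quotes it from Rockafellar, I would not elaborate further in the final write-up.
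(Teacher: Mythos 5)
Your proposal is correct and is essentially identical to the paper's own proof: both write the two defining subgradient inequalities (one for $g_1$ evaluated at $x_2$, one for $g_2$ evaluated at $x_1$) and add them so that the function values cancel, leaving the monotonicity inequality. No gaps, nothing further to add.
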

\begin{proof} By the definition of subgradient, we have
$$f(x_1) - f(x_2) \geq \langle x_1-x_2, g_2\rangle,\quad f(x_2) - f(x_1) \geq \langle x_2-x_1, g_1\rangle.$$
Adding the above two inequalities proves the lemma. \qquad
\end{proof}
Then we have the following result.
\begin{lemma}\label{lemma:finite_series_sum}
If $\mu_k$ is nondecreasing then each entry of the following
series is nonnegative and its sum is finite:
\begin{eqnarray}
\begin{array}{rl}
&\sum\limits_{k=1}^{+\infty}\mu_k^{-1}(\langle
Y_{k+1}-Y_k,E_{k+1}-E_k\rangle+\langle
A_{k+1}-A^*,\hat{Y}_{k+1}-Y^*\rangle+\langle
E_{k+1}-E^*,Y_{k+1}-Y^*\rangle)\\
<&+\infty.
\end{array}\label{eqn:finite_series_sum}
\end{eqnarray}
\end{lemma}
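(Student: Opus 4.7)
The plan is to prove the two claims---nonnegativity of each summand and finiteness of the whole series---separately. For nonnegativity, the key observation is that the IALM updates furnish subgradient inclusions to which the monotonicity of convex subgradients (Lemma \ref{lemma:monotone}) applies directly on each of the three inner products. For finiteness, I will recast the identity of Lemma \ref{lemma:recursive_relationship} as a descent inequality for a Lyapunov-type quantity and then telescope.

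For step one, the optimality condition for the $A$-update in Algorithm \ref{alg:inexact_ALM} gives $\hat Y_{k+1}\in\partial\|A_{k+1}\|_*$, and the optimality condition for the $E$-update gives $Y_{k+1}\in\partial(\|\lambda E_{k+1}\|_1)$, applicable at every index. The KKT conditions for the primal-dual pair (\ref{eqn:rpca})--(\ref{eqn:Dual}) supply $Y^*\in\partial\|A^*\|_*\cap\partial(\|\lambda E^*\|_1)$. Invoking Lemma \ref{lemma:monotone} on $\|\cdot\|_*$ (for the first inner product) and on $\|\lambda\,\cdot\|_1$ (for the other two, once with the pair $E_{k+1},E^*$ and once with the pair $E_{k+1},E_k$) yields
$$\langle A_{k+1}-A^*,\hat Y_{k+1}-Y^*\rangle\geq 0,\quad \langle E_{k+1}-E^*,Y_{k+1}-Y^*\rangle\geq 0,\quad \langle Y_{k+1}-Y_k,E_{k+1}-E_k\rangle\geq 0,$$
so each summand $T_k$ of the series is nonnegative.

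For step two, introduce the Lyapunov quantity $V_k \doteq \|E_k-E^*\|_F^2 + \mu_{k-1}^{-2}\|Y_k-Y^*\|_F^2$. Rearranging the identity of Lemma \ref{lemma:recursive_relationship} by inserting the missing $\mu_{k-1}^{-2}\|Y_k-Y^*\|_F^2$ term on both sides gives
$$V_{k+1}-V_k \;=\; (\mu_k^{-2}-\mu_{k-1}^{-2})\|Y_k-Y^*\|_F^2 - \|E_{k+1}-E_k\|_F^2 - \mu_k^{-2}\|Y_{k+1}-Y_k\|_F^2 - 2\mu_k^{-1}T_k.$$
Because $\{\mu_k\}$ is nondecreasing, $\mu_k^{-2}-\mu_{k-1}^{-2}\leq 0$, and the two squared-norm terms are manifestly nonpositive, so $V_{k+1}-V_k\leq -2\mu_k^{-1}T_k$. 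Summing from $k=1$ to $N$ and using $V_{N+1}\geq 0$ gives $\sum_{k=1}^{N}2\mu_k^{-1}T_k\leq V_1$, and $V_1$ is finite thanks to the boundedness of $\{Y_k\}$ established in Lemma \ref{lemma:bound_Y}. Letting $N\to\infty$ yields (\ref{eqn:finite_series_sum}).

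The main obstacle is the mismatched $\mu_k^{-2}$ factor on the two sides of the identity in Lemma \ref{lemma:recursive_relationship}: the naive Lyapunov candidate $\|E_k-E^*\|_F^2+\mu_k^{-2}\|Y_k-Y^*\|_F^2$ fails to telescope because successive iterates carry different weights on the $Y$-term. The shifted weight $\mu_{k-1}^{-2}$ produces a leftover $(\mu_k^{-2}-\mu_{k-1}^{-2})\|Y_k-Y^*\|_F^2$ that has the correct (nonpositive) sign precisely when $\mu_k$ is nondecreasing, which is exactly the hypothesis. Once this sign is secured, the rest is routine bookkeeping.
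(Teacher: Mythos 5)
Your proof is correct and follows essentially the same route as the paper: nonnegativity from the subgradient inclusions $\hat Y_{k+1}\in\partial\|A_{k+1}\|_*$, $Y_{k+1}\in\partial(\|\lambda E_{k+1}\|_1)$, $Y^*\in\partial\|A^*\|_*\cap\partial(\|\lambda E^*\|_1)$ plus monotonicity, and finiteness by telescoping the identity of Lemma \ref{lemma:recursive_relationship} using $\mu_k$ nondecreasing. The only (cosmetic) difference is where the monotonicity of $\mu_k$ is spent: the paper keeps the ``naive'' candidate $\|E_k-E^*\|_F^2+\mu_k^{-2}\|Y_k-Y^*\|_F^2$ and bounds $\mu_{k+1}^{-2}\|Y_{k+1}-Y^*\|_F^2\le\mu_k^{-2}\|Y_{k+1}-Y^*\|_F^2$ on the left-hand side --- so, contrary to your closing remark, that candidate does telescope --- whereas you shift the index to $\mu_{k-1}^{-2}$ and absorb the mismatch on the right-hand side; the two are equivalent.
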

\begin{proof} $(A^*,E^*,Y^*)$ is a saddle point of the Lagrangian function
\begin{equation}
L(A,E,Y)=\|A\|_*+\lambda \|E\|_1 + \langle Y, D-A-E\rangle.
\end{equation}
of the RPCA problem (\ref{eqn:rpca}). So we have
\begin{equation}
Y^*\in\partial \|A^*\|_*,\quad Y^* \in\partial (\|\lambda
E^*\|_1).
\end{equation}
Then by Lemma \ref{lemma:monotone}, $\hat{Y}_{k+1}\in \partial
\|A_{k+1}\|_*$, and $Y_{k+1} \in
\partial \left(\|\lambda E_{k+1}\|_1\right)$ (cf. Section \ref{sec:boundedness}),
we have
\begin{equation}
\begin{array}{rcl}
\langle A_{k+1}-A^*,\hat{Y}_{k+1}-Y^*\rangle &\geq& 0,\\
\langle E_{k+1}-E^*,Y_{k+1}-Y^*\rangle &\geq& 0,\\
\langle E_{k+1}-E_k,Y_{k+1}-Y_k\rangle &\geq& 0.
\end{array}
\end{equation}
The above together with $\mu_{k+1}\geq \mu_k$ and
(\ref{eqn:identity}), we have that
$\{\|E_{k}-E^*\|_F^2+\mu_{k}^{-2}\|Y_{k}-Y^*\|_F^2\}$ is
non-increasing and
\begin{eqnarray}
\begin{array}{rl}
&2\mu_k^{-1}(\langle Y_{k+1}-Y_k,E_{k+1}-E_k\rangle+\langle
A_{k+1}-A^*,\hat{Y}_{k+1}-Y^*\rangle+\langle
E_{k+1}-E^*,Y_{k+1}-Y^*\rangle)\\
\leq & (\|E_{k}-E^*\|_F^2+\mu_{k}^{-2}\|Y_{k}-Y^*\|_F^2) -
(\|E_{k+1}-E^*\|_F^2+\mu_{k+1}^{-2}\|Y_{k+1}-Y^*\|_F^2).
\end{array}
\end{eqnarray}
So (\ref{eqn:finite_series_sum}) is proven. \qquad
\end{proof}

\subsection{Proof of Theorem \ref{thm:inexact_ALM}} \label{sec:inexact_ALM}
\
\begin{proof} When $\{\mu_k\}$ is upper bounded, the convergence
of Algorithm \ref{alg:inexact_ALM} is already proved by He et al.
In the following, we assume that $\mu_k\rightarrow +\infty$.

Similar to the proof of Lemma \ref{lemma:finite_series_sum}, we
have
$$\sum\limits_{k=1}^{+\infty}\mu_k^{-2}\|Y_{k+1}-Y_k\|_F^2<+\infty.$$ So
$$\|D-A_k-E_k\|_F=\mu_k^{-1}\|Y_{k}-Y_{k-1}\|_F\rightarrow 0.$$
Then any accumulation point of $(A_k,E_k)$ is a feasible solution.

On the other hand, denote the optimal objective value of the RPCA
problem by $f^*$. As $ \hat{Y}_{k}\in\partial\|A_{k}\|_*$ and
$Y_{k}\in\partial \left(\lambda \|E_{k}\|_1\right)$, we have
\begin{equation}\label{eqn:upper_bound2}
\begin{array}{rl}
&\|A_{k}\|_*+\lambda \|E_{k}\|_1 \\
\leq& \|A^*\|_*+\lambda \|E^*\|_1 - \langle \hat{Y}_{k}, A^*-A_{k}
\rangle - \langle Y_{k},
E^*-E_{k}  \rangle\\
=& f^* +\langle Y^*-\hat{Y}_{k}, A^*-A_{k} \rangle + \langle
Y^*-Y_{k}, E^*-E_{k}  \rangle - \langle
Y^*,A^*-A_{k}+E^*-E_{k}\rangle\\
=&f^* +\langle Y^*-\hat{Y}_{k}, A^*-A_{k} \rangle + \langle
Y^*-Y_{k}, E^*-E_{k}  \rangle - \langle Y^*,D-A_{k}-E_{k}\rangle.
\end{array}
\end{equation}
From Lemma \ref{lemma:finite_series_sum},
\begin{eqnarray}
\sum\limits_{k=1}^{+\infty}\mu_k^{-1}(\langle
A_{k}-A^*,\hat{Y}_{k}-Y^*\rangle+\langle
E_{k}-E^*,Y_{k}-Y^*\rangle)<+\infty.
\end{eqnarray}
As $\sum\limits_{k=1}^{+\infty}\mu_k^{-1}=+\infty$, there must
exist a subsequence $(A_{k_j},E_{k_j})$ such that
$$\langle
A_{k_j}-A^*,\hat{Y}_{k_j}-Y^*\rangle+\langle
E_{k_j}-E^*,Y_{k_j}-Y^*\rangle \rightarrow 0.$$ Then we see that
$$\lim\limits_{j\rightarrow +\infty}\|A_{k_j}\|_*+\lambda
\|E_{k_j}\|_1\leq f^*.$$ So $(A_{k_j},E_{k_j})$ approaches to an
optimal solution $(A^*,E^*)$ to the RPCA problem. As
$\mu_k\rightarrow +\infty$ and $\{Y_k\}$ are bounded, we have that
$\{\|E_{k_j}-E^*\|_F^2+\mu_{k_j}^{-2}\|Y_{k_j}-Y^*\|_F^2\}
\rightarrow 0$.

On the other hand, in the proof of Lemma
\ref{lemma:finite_series_sum} we have shown that
$\{\|E_{k}-E^*\|_F^2+\mu_{k}^{-2}\|Y_{k}-Y^*\|_F^2\}$ is
non-increasing. So
$\|E_{k}-E^*\|_F^2+\mu_{k}^{-2}\|Y_{k}-Y^*\|_F^2 \rightarrow 0$
and we have that $\lim\limits_{k\rightarrow +\infty} E_k=E^*$. As
$\lim\limits_{k\rightarrow+\infty}D-A_{k}-E_{k}=0$ and
$D=A^*+E^*$, we see that $\lim\limits_{k\rightarrow +\infty}
A_k=A^*$. \qquad
\end{proof}

\subsection{Proof of Theorem \ref{thm:necessary_condition}} \label{sec:necessary_condition}
\
\begin{proof} In Lemma \ref{lemma:bound_Y}, we have proved that both $\{Y_k\}$ and
$\{\hat{Y}_k\}$ are bounded sequences. So there exists a constant
$C$ such that
$$\|Y_k\|_F\leq C,\quad\mbox{and}\quad \|\hat{Y}_k\|_F\leq C.$$
Then $$\|E_{k+1}-E_k\|_F =
\mu_k^{-1}\|\hat{Y}_{k+1}-Y_{k+1}\|_F\leq 2C\mu_k^{-1}.
$$
As $\sum\limits_{k=0}^\infty \mu_k^{-1} < +\infty$, we see that
$\{E_k\}$ is a Cauchy sequence, hence it has a limit $E_{\infty}$.
Then
\begin{eqnarray}
\begin{array}{rl}
\|E_{\infty} - E^*\|_F & =
\left\|E_0+\sum\limits_{k=0}^\infty(E_{k+1}-E_k)-E^*\right\|_F\\
&\geq \|E_0-E^*\|_F -\sum\limits_{k=0}^\infty\|E_{k+1}-E_k\|_F\\
&\geq \|E_0-E^*\|_F -2C\sum\limits_{k=0}^\infty \mu_k^{-1}.
\end{array}
\end{eqnarray}
So if Algorithm \ref{alg:inexact_ALM} is badly initialized such
that $$\|E_0-E^*\|_F
> 2C\sum\limits_{k=0}^\infty \mu_k^{-1}$$
then $E_k$ will not converge to $E^*$.
\end{proof}

% BibTeX users please use one of
%\bibliographystyle{spbasic}      % basic style, author-year citations
\bibliographystyle{spmpsci}      % mathematics and physical sciences
\bibliography{rpca_algorithms}   % name your BibTeX data base

% Non-BibTeX users please use
%\begin{thebibliography}{}
%
% and use \bibitem to create references. Consult the Instructions
% for authors for reference list style.
%
%\bibitem{RefJ}
% Format for Journal Reference
%Author, Article title, Journal, Volume, page numbers (year)
% Format for books
%\bibitem{RefB}
%Author, Book title, page numbers. Publisher, place (year)
% etc
%\end{thebibliography}

\end{document}